\title{Max-plus Linear Inverse Problems: \\ 2-norm regression and system identification of max-plus linear dynamical systems with Gaussian noise}
\author{James Hook}

\documentclass[12pt]{article}
\usepackage{eqnarray,amssymb}
\usepackage{amsmath,amsthm}
\usepackage{pgf}
\usepackage{tikz}
\usetikzlibrary{arrows,automata}
\usepackage[latin1]{inputenc}
\usepackage{graphicx}
\newtheorem{theorem}{Theorem}[section]
\newtheorem{lemma}[theorem]{Lemma}

\newtheorem{example}[theorem]{Example}
\newtheorem{corollary}[theorem]{Corollary}

\usepackage{float}
\usepackage{enumitem}
\usepackage{subfigure}
\usepackage{caption}
\usepackage{longtable}
\usepackage{stmaryrd}

\usepackage{bm}

\DeclareFontFamily{OT1}{pzc}{}
\DeclareFontShape{OT1}{pzc}{m}{it}{<-> s * [1.10] pzcmi7t}{}
\DeclareMathAlphabet{\mathpzc}{OT1}{pzc}{m}{it}

\usepackage{algorithm}
\usepackage{algpseudocode}


\DeclareFontFamily{OT1}{pzc}{}
\DeclareFontShape{OT1}{pzc}{m}{it}{<-> s * [1.10] pzcmi7t}{}
\DeclareMathAlphabet{\mathpzc}{OT1}{pzc}{m}{it}

\def\uc#1{\mathcal{#1}}

\def\Rmax{\mathbb{R}_{\max}}

\def\R{\mathbb{R}}

\def\col{\hbox{\normalfont col}}
\def\support{\hbox{\normalfont support}}

\def\pattern{\hbox{\normalfont pattern}}
\def\Cl{\hbox{\normalfont Cl}}

\def\relint{\hbox{\normalfont relint}}

\def\uc{\mathcal}

\usepackage{subfigure}
\usepackage{caption}

\newtheorem{problem}[theorem]{Problem}

\theoremstyle{definition}
\usepackage[hmargin=2cm,vmargin=2cm]{geometry}

\begin{document}
\maketitle
\begin{abstract}

In this paper we present new theory and algorithms for $2$-norm regression over the max-plus semiring. As an application we also show how max-plus $2$-norm regression can be used in system identification of max-plus linear dynamical systems with Gaussian noise. We also introduce and provide methods for solving a max-plus linear inverse problem with regularization, which can be used when the the original problem is not well posed.

\end{abstract}

\section{Introduction}

Max-plus algebra concerns the max-plus semiring $\Rmax=[\R\cup\{-\infty\},\oplus,\otimes]$, with
\begin{equation}
a\oplus b=\max\{a,b\}, \quad a\otimes b=a+b, \quad \hbox{for all $a,b\in\Rmax$}.
\end{equation}
A max-plus matrix is an array of elements from $\Rmax$ and max-plus matrix multiplication is defined in analogy to the classical (i.e. not max-plus) case. For $A\in\Rmax^{n\times d}$ and $B\in\Rmax^{d\times m}$ we have $A\otimes B\in\Rmax^{n\times m}$ with 
\begin{equation}
(A\otimes B)_{ij}=\max_{k=1}^{d}(a_{ik}+b_{kj}),
\end{equation}
for $i=1,\dots,n$, $j=1,\dots,m$. Max-plus algebra has found a wide range of applications in operations research, dynamical systems  and control, see for example \cite{butk10,heid2006,DESCHUTTER20011049} and the references therein. This paper concerns the following max-plus regression problem with $p=2$. To the best of our knowledge we are the first to study this problem explicitly.  
\begin{problem}\label{mppnreg}
For $A\in\Rmax^{n\times d}$,  $\bm{y}\in\Rmax^n$ and $p\geq 1$, we seek
\begin{equation}\label{red443}
\min_{\bm{x}\in\Rmax^d}\|A\otimes \bm{x}-\bm{y}\|_{p}.
\end{equation}
\end{problem}
In order for Problem~\ref{mppnreg} to be well defined we need to be able to measure the $p$-norm distance between arbitary vectors in $\Rmax^n$, which we do as follows. For $\bm{x}\in\Rmax^n$ define $\support(\bm{x})\subseteq\{1,\dots,n\}$, by $i\in\support(\bm{x})$ $\Leftrightarrow$ $\bm{x}_{i}>-\infty$. Then for $\bm{x},\bm{y}\in\Rmax^{n}$, we use the convention that
\begin{equation}\label{2_norm}
\|\bm{x}-\bm{y}\|_{p}=\left\{\begin{array}{cc}  \|\bm{x}_{\support(\bm{x})}-\bm{y}_{\support(\bm{y})}\|_{p}, & \hbox{if $\support(\bm{x})=\support(\bm{y})$,} \\ \infty, & \hbox{otherwise,}  \end{array}\right. 
\end{equation}
where $\bm{x}_{\support(\bm{x})}$ is the sub-vector of $\bm{x}$ formed from its finite entries.

\subsection{System identification}

Virtually every application of max-plus algebra in dynamical systems and control exploit its ability to model certain classically non-linear phenomena in a linear way, as illustrated in the following example.

\begin{example}Consider a distributed computing system in which $d$ processors update a computation in parallel. At each stage processor $i$ must wait until it has received input from its neighboring processors before beginning its next local computation. Then after completing its local computation it must broadcast some output back to its neighboring processors. Define the vectors of update times $\bm{t}(0),\dots,\bm{t}(N)\in\Rmax^d$, by $\bm{t}(n)_{i}=$ the time at which processor $i$ completes its $n$th local computation. These update times can be modeled by 
\begin{equation}\label{ffffffff}
\bm{t}(n+1)=M\otimes \bm{t}(n), \quad \hbox{for $n=0,\dots,N-1$,}
\end{equation}
where $M\in\Rmax^{d\times d}$ is the max-plus matrix given by
\begin{equation}
m_{ij}=\left\{\begin{array}{cc} a_{i}+c_{ij}, & \hbox{if $j\in J_{i}$}, \\ -\infty, & \hbox{otherwise}, \end{array}\right.
\end{equation}
where $a_{i}$ is the time taken for processor $i$'s local computation, $c_{ij}$ is the time taken for communication from processor $j$ and processor $i$ receives input from the processors $J_{i}\subset\{1,\dots,d\}$, for $i,j=1,\dots,d$. The update rule \eqref{ffffffff} constitutes a max-plus linear dynamical system. By studying the max-plus algebraic properties of the matrix $M$ we can now predict the behavior of the system, for example computing its leading eigenvalue to determine the average update rate of the computations iteration. 
\end{example}

Using the petri-net formalism, such max-plus linear  models can be derived for more complicated systems of interacting timed events  \cite[Chapter 7]{heid2006}. These linear models can be extended by introducing stochasticity, which in the above example could model random variability in the time taken for messages to pass through the computer network \cite[Chapter 11]{hook2013,heid2006}, by allowing the system to switch between one of several governing max-plus linear equations \cite{vandenBoom2012}, or by including a controller input \cite{DESCHUTTER20011049}. This approach has been used to model a wide variety of processes including the Dutch railway system \cite[Chapter 8]{heid2006}, mRNA translation \cite{BRACKLEY2012128} and the Transmission Control Protocol (TCP) \cite{Baccelli:2000:TML:347057.347548}. 

In this context \emph{forwards problems} arise by presupposing a dynamical systems model then asking questions about how its orbits must behave. Conversely an \emph{inverse problem} is to infer a dynamical systems model from an empirical time series recording. In the control theory literature this inverse problem is referred to as \emph{system identification}. For example in \cite{Farahani2014,1184996,7085024,7082376} the authors present methods for system identification of stochastic max-plus linear control systems. These methods, which can be applied to a very wide class of system, with non-Gaussian noise processes, work by formulating a non-linear programming problem for the unknown system parameters, which is then solved using one of several possible standard gradient based algorithm. However, the resulting problems are necessarily non-smooth and non-convex, which makes the optimization difficult. 

\subsection{This paper's contribution}

In this paper we introduce the max-plus $2$-norm regression problem and show how system identification of max-plus linear dynamical systems with Gaussian noise can be expressed in this way. Doing so enables us to apply existing theory from the max-plus linear algebra literature, along with some new theory developed in this paper, to better understand the geometric and combinatorial aspects of the system identification problem. In particular we are able to demonstrate the geometric cause of the non-convexity and prove that in general determining whether or not a point is a local minimum of the residual is an NP-hard problem. 

We provide two algorithms. One is an exponential cost method that is guaranteed to return the global minimum. The other is based on Newton's method and has cost $\Theta(nd)$ per iteration but cannot be guaranteed to converge even to a local minimum, although as demonstrated in the example problem we find that it tends to work well in practice. 

We also present a regularized version of the max-plus $2$-norm regression problem, which may be useful in data analysis applications. We also present a method to solve this regularized problem which we name Iteaitivley Reshifted Least Squares since it bears a striking similarity to the well known Iteratively Reweighed Least Squares method, which is used to solve classical $1$-norm regularized $2$-norm regression problems.  

The remainder of this paper is organized as follows. In Section 2 we introduce max-plus regression with a small example demonstrating the $p=2$ and $p=\infty$ cases. In Section 2.1 we make a detailed study of the $p=2$ case. In Subsection~\ref{cl} we develop some necessary machinery for working with max-plus sets and functions. In Subsection~\ref{fifo} we show how any max-plus $2$-norm regression problem can be reduced to a smaller problem with finite right hand side and at least one finite entry per row. In Subsections~\ref{pat1},~\ref{pat2},~\ref{pat3} and \ref{pat4} we develop theory for the combinatorial and geometric aspects of the max-plus $2$-norm regression problem before giving a small explicit example. In Section 2.2 we prove the NP-hardness result. In Sections 2.3 and 2.4 we introduce our two algorithms and in Section 3 we apply our new theory and algorithms to a systems identification example using both the original and regularized forms of the problem.

\section{Max-plus regression}\label{reg}
The column space of a max-plus matrix $A\in\Rmax^{n\times d}$ is simply the image of the matrix vector multiplication map
\begin{equation}
\col(A)=\{A\otimes \bm{x}~:~\bm{x}\in\Rmax^d\}.
\end{equation}
Just as in the classical case, the $p$-norm regression problem can be written as an optimization over the column space of the matrix
\begin{equation}\label{colview}
\min_{\bm{x}\in\Rmax^d}\|A\otimes \bm{x}-\bm{y}\|_{p}=\min_{\bm{z}\in\col(A)}\|\bm{z}-\bm{y}\|_{p}.
\end{equation}
Understanding the geometry of the column space is therefore key to understanding the regression problem. For more detail on max-plus linear spaces see \cite{firststeps} and the references therein.
\begin{example}\label{inf2egg}
Consider 
$$
A=\left[\begin{array}{cc} 0 & 0 \\ 1 & 0 \\ 0 & 1 \end{array}\right], \quad \bm{y}=\left[\begin{array}{cc} 1  \\ 1  \\ 1 \end{array}\right].
$$
The column space of $A$ is given by the union of two simplices  
$$
\col(A)=\{[x_{1},x_1+1,x_2+1]^{\top}~:~x_2 \leq x_1 \leq x_2+1\}\bigcup\{[x_2,x_1+1,x_2+1]^{\top}~:~x_1 \leq x_2 \leq x_1+1\}.
$$
Equivalently $\col(A)$ is a prism with an L-shaped cross section
$$
\col(A)=\{\bm{l}+[\alpha,\alpha,\alpha]^{\top}~:~\bm{l} \in L,~\alpha\in\Rmax\},
$$
where
$$
L=\{[0,t,1]^{\top}~:~t\in[0,1]\}\bigcup \{[0,1,t]^{\top}~:~t\in[0,1]\}.
$$
Now consider Problem~\ref{mppnreg} with $p=\infty$. Figure~\ref{pinfp2} (a) displays the column space of $A$ along with the target vector $\bm{y}$. We have also plotted the ball 
$$
B_{\infty}(\bm{y},1/2)=\{\bm{y'}\in\Rmax^3~:~\|\bm{y}'-\bm{y}\|_{\infty}=1/2\},
$$
which is the smallest such ball that intersects $\col(A)$. Therefore the minimum value of the residual is $1/2$ and the closest points in the column space are given by the L-shaped 
set 
$$
\arg\min_{\bm{z}\in\col(A)}\|\bm{z}-\bm{y}\|_{\infty}=\{[1/2,3/2,t]^{\top}~:~t\in[1/2,3/2]\}\bigcup \{[1/2,t,3/2]^{\top}~:~t\in[1/2,3/2]\}.
$$
Next consider Problem~\ref{mppnreg} with $p=2$. Figure~\ref{pinfp2} (b) displays the column space of $A$ along with the target vector $\bm{y}$. We have also plotted the ball 
$$
B_{2}(\bm{y},1/\sqrt{2})=\{\bm{y'}\in\Rmax^3~:~\|\bm{y}'-\bm{y}\|_{2}=1/\sqrt{2}\},
$$
which is the smallest such ball that intersects $\col(A)$. Therefore the minimum value of the residual is $1/\sqrt{2}$ and the closest points in the column space are given by
$$
\arg\min_{\bm{z}\in\col(A)}\|\bm{z}-\bm{y}\|_{\infty}=\{[1/2,3/2,1]^{\top},[1/2,1,3/2]^{\top}\}.
$$

\end{example}

\begin{figure}[t]
\begin{center}
\subfigure[$p=\infty.$]{\includegraphics[scale=0.225]{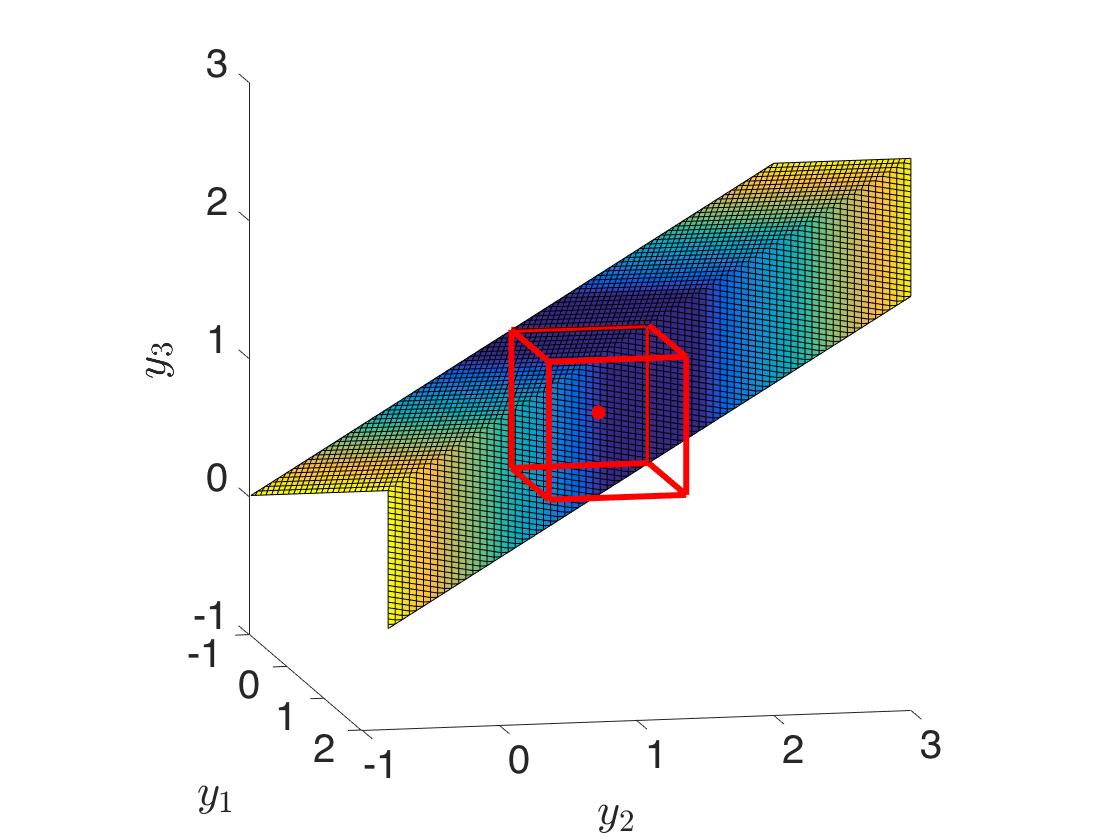}}\subfigure[$p=2.$]{\includegraphics[scale=0.225]{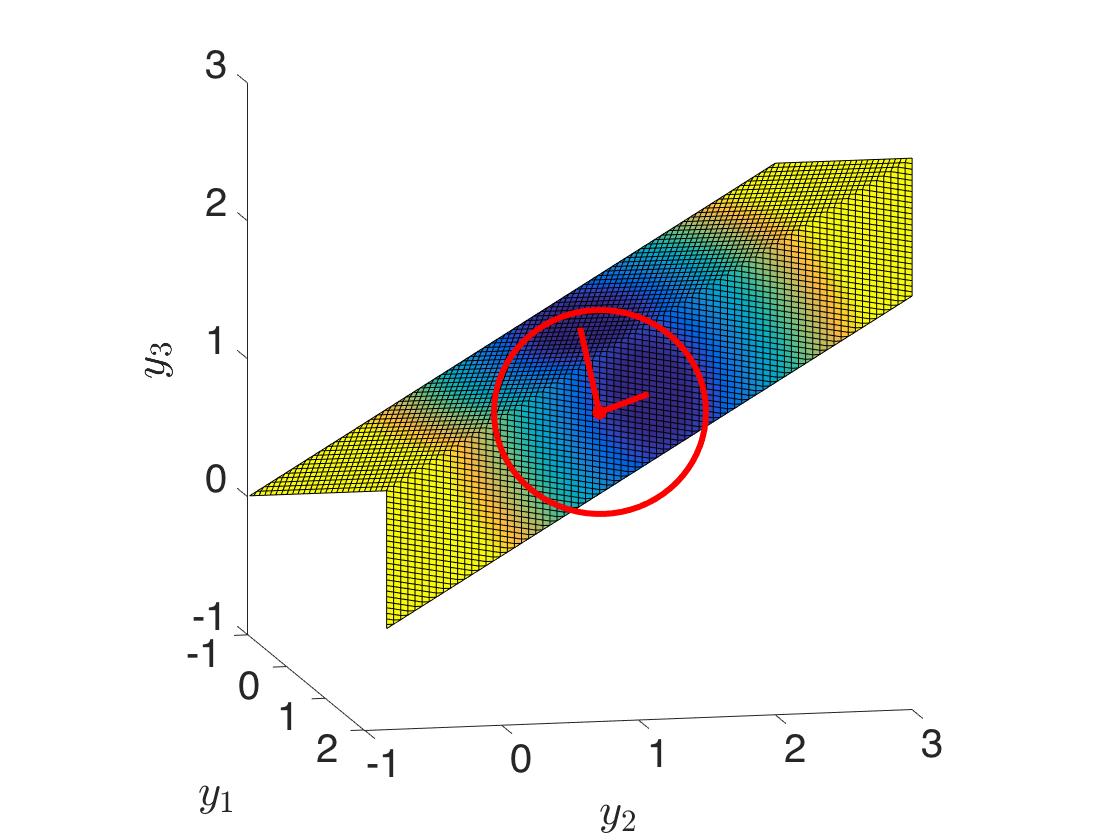}} 
\caption{Column space view of max-plus regression.}\label{pinfp2}
\end{center}
\end{figure}

We saw in Example~\ref{inf2egg} that the max-plus $\infty$-norm regression problem could support multiple optimal solutions comprising a non-convex set. However it can be shown that the max-plus $\infty$-norm regression problem is convex with respect to max-plus algebra \cite{73810} and that we can fairly easily compute an optimal solution for it \cite[Section 3.5]{butk10}. We also saw that the max-plus $2$-norm regression problem could support multiple isolated local minima and was therefore non-convex. However, because these local minima do not form a single, path connected set, this example also shows that the problem is max-plus non-convex. 

\subsection{2-norm regression}\label{2normo}

Since the max-plus $2$-norm regression problem is non-convex in both classical and max-plus senses it is more difficult to solve than the $\infty$-norm problem. However it may be more useful in application as $2$-norm regression corresponds to computing a maximum likelihood estimate for a Bayesian inverse problem with Gaussian noise, as illustrated by the example in Section 3. In the remainder of this section we make a detailed study of the max-plus $2$-norm regression problem. 
%
%

\subsection{Max-plus closure}\label{cl}

Our approach to max-plus regression will be to develop classical mathematical description of all of the different max-plus objects that play an important role in the problem. In order to do this we need to be able to take closures of sets in $\Rmax^n$ and extend or continue the definition of certain functions from $\R^n$ to $\Rmax^n$. 

We say that the sequence $(a_{t}\in \R)_{t=1}^{\infty}$ converges to $\hat{a}\in\Rmax$, if either $\hat{a}\in\R$ and the sequence converges to $\hat{a}$ in the usual sense, or $\hat{a}=-\infty$ and for all $m\in\mathbb{R}$, there exists $t_{0}$ such that $a_{t}\leq m$, for all $t \geq t_{0}$. This definition of convergence is extended componentwise to vectors in the obvious way. The max-plus closure of $X\subseteq \R^{n}$, denoted $\Cl(X)$, consist of all such limit points of sequences in $X$. For example $\Cl(\{0\})=\{0\}$, $\Cl(\{\bm{x}\in\mathbb{R}^2~:~\bm{x}_1>\bm{x}_2\})=\{\bm{x}\in\Rmax^2~:~\bm{x}_{1}\geq \bm{x}_{2}\}$ and $\Cl(\R^n)=\Rmax^n$.

A function $f:\R^n\mapsto \R$ is \emph{monotonic} if $f(\bm{x})\leq f(\bm{y})$, for all $\bm{x},\bm{y}\in\R^n$ such that $\bm{x}\leq \bm{y}$, where $\leq$ is the standard partial order on $\R^n$.  A sequence $(\bm{a}_{t}\in\R^n)_{t=1}^{\infty}$ is \emph{monotonically non-increasing} if $\bm{a}_{t+1}\leq \bm{a}_{t}$, for all $t=1,2,\dots$. Given a uniformly continuous monotonic function $f:\R^n \mapsto \R$, we \emph{continue} its definition to $f:\Rmax^n \mapsto \Rmax$ using the rule that
\begin{equation}\label{mpexp}
f(\hat{\bm{a}})=\lim_{t\rightarrow\infty}f(\bm{a}_{t}),
\end{equation}
whenever the monotonically non-increasing sequence $(\bm{a}_{t}\in\R^n)_{t=1}^{\infty}$ converges to $\hat{\bm{a}}\in\Rmax^n$. 
\begin{theorem}Let $f:\R^n\mapsto \R$ be a uniformly continuous monotonic function. Then the continuation $f:\Rmax^n \mapsto \Rmax$ is well defined. 
\end{theorem}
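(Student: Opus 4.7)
The plan is to verify that the rule in \eqref{mpexp} unambiguously defines a value $f(\hat{\bm{a}})\in\Rmax$ for every $\hat{\bm{a}}\in\Rmax^n$. This breaks into three things to check: (i) at least one monotonically non-increasing sequence in $\R^n$ converging to $\hat{\bm{a}}$ exists; (ii) for any such sequence the limit $\lim_{t\to\infty}f(\bm{a}_t)$ exists in $\Rmax$; and (iii) the limit does not depend on the choice of sequence. Consistency with the original $f$ on $\R^n$ then follows by taking the constant sequence $\bm{a}_t=\hat{\bm{a}}$.

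For (i), I would exhibit an explicit construction: set $(\bm{a}_t)_i=\hat{a}_i$ for indices $i\in\support(\hat{\bm{a}})$ and $(\bm{a}_t)_i=-t$ otherwise. This sequence is componentwise non-increasing and converges to $\hat{\bm{a}}$ in the sense given just above the statement. For (ii), since $f$ is monotonic and $(\bm{a}_t)$ is monotonically non-increasing, the real sequence $(f(\bm{a}_t))_{t=1}^{\infty}$ is itself monotonically non-increasing; it therefore either converges to a finite real limit (if bounded below) or tends to $-\infty$, and in either case the limit exists in $\Rmax$.

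Step (iii) is the main obstacle, since two different approximating sequences may agree in their finite coordinates but disagree wildly on the coordinates tending to $-\infty$, so one cannot simply appeal to $\|\bm{a}_t-\bm{b}_t\|_\infty\to 0$. My plan is to use the interplay between monotonicity of $f$ and uniform continuity. Let $(\bm{a}_t)$ and $(\bm{b}_t)$ be two admissible sequences with limits $L_a,L_b\in\Rmax$, and fix $\epsilon>0$. For each fixed $s$, I will show that for all sufficiently large $t$ one has $\bm{b}_t \leq \bm{a}_s+\epsilon\bm{1}$ componentwise: on coordinates $i\in\support(\hat{\bm{a}})$ this uses $(b_t)_i\to \hat{a}_i\leq (a_s)_i$, while on coordinates $i\notin\support(\hat{\bm{a}})$ we have $(b_t)_i\to -\infty$ and $(a_s)_i\in\R$, so the inequality is trivially eventually true. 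Monotonicity then gives $f(\bm{b}_t)\leq f(\bm{a}_s+\epsilon\bm{1})$, and uniform continuity of $f$ on $\R^n$ bounds $f(\bm{a}_s+\epsilon\bm{1})\leq f(\bm{a}_s)+\omega(\epsilon)$ for a modulus $\omega(\epsilon)\to 0$ as $\epsilon\to 0$. Letting $t\to\infty$ and then $s\to\infty$ yields $L_b\leq L_a+\omega(\epsilon)$, and letting $\epsilon\to 0$ gives $L_b\leq L_a$; the reverse inequality follows by symmetry.

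A subtlety to watch in step (iii) is that when $L_a$ or $L_b$ equals $-\infty$ the inequalities should be interpreted in $\Rmax$; I would handle this by observing that if $L_a=-\infty$ then $f(\bm{a}_s)\to -\infty$, which when combined with $f(\bm{b}_t)\leq f(\bm{a}_s)+\omega(\epsilon)$ forces $L_b=-\infty$ as well, and symmetrically. With (i), (ii), and (iii) established, the continuation is well defined and agrees with $f$ on $\R^n$ via the constant sequence, completing the proof.
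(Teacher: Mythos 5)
Your proof is correct, and it rests on the same two pillars as the paper's: monotonicity of $f$ to dominate the coordinates tending to $-\infty$, and uniform continuity to absorb a small perturbation of the finite coordinates. The organization differs in a way worth noting. The paper fixes the canonical sequence $(\bm{b}_t)$ (finite coordinates frozen at $\hat{\bm{a}}$, the rest set to $-t$) and proves the two inequalities asymmetrically: for $\lim f(\bm{a}_t)\leq\lim f(\bm{b}_t)$ it introduces an auxiliary sequence $(\bm{c}_t)$ agreeing with $\hat{\bm{a}}$ on $\support(\hat{\bm{a}})$ and with $\bm{a}_t$ elsewhere, so that $\|\bm{c}_t-\bm{a}_t\|_2\leq\delta$ while $\bm{c}_t\leq\bm{b}_m$; for the reverse inequality it constructs a subsequence $\bm{b}_{B(t)}\leq\bm{a}_t$ via an integer-floor index. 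You instead prove a single symmetric comparison between two arbitrary admissible sequences -- eventually $\bm{b}_t\leq\bm{a}_s+\epsilon\bm{1}$ -- and apply it twice, which eliminates both the auxiliary sequence and the subsequence construction and is arguably cleaner; you also state explicitly (where the paper leaves it implicit) that $\big(f(\bm{a}_t)\big)$ is non-increasing for \emph{every} admissible sequence, so its limit exists in $\Rmax$ before any uniqueness argument is needed. The only point to make sure you spell out is that the modulus bound $f(\bm{a}_s+\epsilon\bm{1})\leq f(\bm{a}_s)+\omega(\epsilon\sqrt{n})$ is exactly what uniform continuity supplies, which you do.
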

\begin{proof}
Suppose that $(\bm{a}_{t}\in\R^n)_{t=1}^{\infty}$ is a monotonically non-increasing sequences that converges to $\hat{\bm{a}}\in\Rmax^n$. Consider the sequences $(\bm{b}_{t})_{t=1}^{\infty}$ and $(\bm{c}_{t})_{t=1}^{\infty}$ defined by
$$
(\bm{b}_{t})_{i}=\left\{\begin{array}{cc} (\hat{\bm{a}})_{i} & \hbox{if $i\in\support(\hat{\bm{a}})$}, \\ -t & \hbox{otherwise.} \end{array}\right. \quad (\bm{c}_{t})_{i}=\left\{\begin{array}{cc} (\hat{\bm{a}})_{i} & \hbox{if $i\in\support(\hat{\bm{a}})$}, \\ (\bm{a}_{t})_{i} & \hbox{otherwise.} \end{array}\right.
$$
Clearly $(\bm{b}_{t})_{t=1}^{\infty}$ also converges to $\hat{\bm{a}}$ and $\lim_{t\rightarrow\infty}f(\bm{b}_{t})$ exists in the sense of convergence introduced above because it is the limit of a non-increasing sequence. We now show that $\big(f(\bm{a}_{t})\big)_{t=1}^{\infty}$ has the same limit. 

By uniform continuity, for all $\epsilon>0$ there exists $\delta>0$, such that $|f(\bm{x})-f(\bm{y})|\leq \epsilon$, whenever $\|\bm{x}-\bm{y}\|_{2}\leq \delta$. By the definition of convergence given above, for all such $\epsilon,\delta>0$ and $m \in\mathbb{N}$ there exists $T(\epsilon,m)$ such that
$$
(\bm{a}_{t})_{i}\leq \left\{ \begin{array}{cc} (\hat{\bm{a}})_{i}+\frac{\delta}{\sqrt{n}} & \hbox{if $i\in\support(\hat{\bm{a}})$}, \\ -m & \hbox{otherwise},\end{array}\right.
$$
for all $i=1,\dots,n$ and all $t\geq T(\epsilon,m)$. Note that $\|\bm{c}_{t}-\bm{a}_{t}\|_{2}\leq \delta$, so that $|f(\bm{c}_{t})-f(\bm{a}_{t})|\leq \epsilon$, and $\bm{c}_{t}\leq \bm{b}_{m}$, for all $t\geq T(\epsilon,m)$. Therefore
$$
f(\bm{a}_{t})\leq f(\bm{b}_{m})+\epsilon
$$
and taking the limit $\epsilon\rightarrow 0$ and the limit $m\rightarrow\infty$ we obtain 
\begin{equation}\label{oneway}
\lim_{t\rightarrow\infty}f(\bm{a}_{t})\leq \lim_{t\rightarrow\infty}f(\bm{b}_{t}).
\end{equation}
Next consider the sequence $\big(B(t)\in\mathbb{N}\big)_{t=1}^{\infty}$, defined by
$$
B(t)=\max\Big\{0,-\min_{i=1}^{n}\lfloor (\bm{a}_{t})_{i}\rfloor\Big\},
$$
where $\lfloor \cdot \rfloor$ denotes the integer floor. Then the subsequence $\big(\bm{b}_{B(t)}\big)_{t=1}^{\infty}$ satisfies $\bm{b}_{B(t)}\leq \bm{a}_{t}$, for all $t=1,2,\dots$. Therefore
$$
\lim_{t\rightarrow\infty}f(\bm{a}_{t})\geq \lim_{t\rightarrow\infty}f(\bm{b}_{B(t)})=\lim_{t\rightarrow\infty}f(\bm{b}_{t}),
$$
which together with \eqref{oneway} completes the proof. 
\end{proof}

\subsection{Reduction to finite form}\label{fifo}

In this section we show that any instance of Problem~\ref{mppnreg} can either, be reduced to a problem with finite right hand side and at least one finite entry per row, or does not admit any solution with finite residual. 

We say that $A\in\Rmax^{n\times d}$ and $\bm{y}\in\Rmax^n$ are a \emph{finite form}, if $\support(\bm{y})=\{1,2,\dots,n\}$ and $\max_{j=1}^{d}a_{ij}>-\infty$, for all $i=1,\dots,n$. Also define the residual $R:\Rmax^d\mapsto \R\cup\{+\infty\}$, by
$$
R(\bm{x})=\|A\otimes\bm{x}-\bm{y}\|_{2}^2/2.
$$
\begin{theorem}\label{ffthm}
If $A\in\Rmax^{n\times d}$ and $\bm{y}\in\Rmax^n$ are a finte form, then $R(\bm{x})<\infty$, for all $\bm{x}\in\R^d$.
\end{theorem}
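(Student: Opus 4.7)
The plan is to show directly from the definition of the max-plus matrix-vector product and the convention \eqref{2_norm} that the residual is finite. The only way $R(\bm{x})$ can equal $+\infty$ is via the second branch of \eqref{2_norm}, namely $\support(A\otimes\bm{x})\neq\support(\bm{y})$, so the entire argument reduces to verifying that these two supports coincide for every $\bm{x}\in\R^d$.

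First, I would fix $\bm{x}\in\R^d$ and compute $\support(A\otimes\bm{x})$ row-by-row. By definition, $(A\otimes\bm{x})_i=\max_{j=1}^{d}(a_{ij}+x_j)$. Because $A$ is in finite form, for every $i\in\{1,\ldots,n\}$ there exists some index $j_i$ with $a_{ij_i}>-\infty$; since $x_{j_i}\in\R$ is finite, the sum $a_{ij_i}+x_{j_i}$ is finite, and hence the maximum over $j$ is at least this finite value. This forces $(A\otimes\bm{x})_i>-\infty$ for each $i$, so $\support(A\otimes\bm{x})=\{1,\ldots,n\}$.

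Next I would invoke the finite form hypothesis on $\bm{y}$, namely $\support(\bm{y})=\{1,\ldots,n\}$. Combining this with the previous step gives $\support(A\otimes\bm{x})=\support(\bm{y})$, so the first branch of the convention \eqref{2_norm} applies and
\[
\|A\otimes\bm{x}-\bm{y}\|_{2}=\|(A\otimes\bm{x})-\bm{y}\|_{2}\in\R,
\]
the ordinary Euclidean norm of a finite real vector. Squaring and halving yields $R(\bm{x})<\infty$, which is the claim.

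There is no real obstacle here: the statement is essentially a bookkeeping check that the finite form conditions on $A$ and $\bm{y}$ exactly rule out the support-mismatch case in \eqref{2_norm}. The only subtlety is remembering that we are evaluating at $\bm{x}\in\R^d$ rather than $\bm{x}\in\Rmax^d$, which is precisely what guarantees every $x_j$ is finite and allows the row-wise argument to go through; if $\bm{x}$ were allowed to have $-\infty$ entries, the corresponding rows of $A$ could fail to contribute a finite maximum, and the conclusion would in general fail.
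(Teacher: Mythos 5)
Your proof is correct and follows essentially the same route as the paper's: pick, for each row $i$, a column index with $a_{ij_i}>-\infty$ (the paper packages this as a map $c:\{1,\dots,n\}\mapsto\{1,\dots,d\}$), bound $(A\otimes\bm{x})_i$ from below by the finite value $a_{ij_i}+x_{j_i}$, and conclude that $\support(A\otimes\bm{x})=\support(\bm{y})=\{1,\dots,n\}$ so the first branch of \eqref{2_norm} applies. No gaps; the closing remark about why $\bm{x}\in\R^d$ matters is a nice touch but not needed for the argument.
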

\begin{proof}
From definition \eqref{2_norm} and the fact that $\support(\bm{y})=\{1,2,\dots,n\}$, we have that $R(\bm{x})<\infty$ whenever 
$$
\support(A\otimes \bm{x})=\{1,2,\dots,n\}.
$$
Since each row of $A$ contains at least one finite entry, there exists $c:\{1,\dots,n\}\mapsto\{1,\dots,d\}$, such that $a_{ic(i)}>-\infty$, for all $i=1,\dots,n$. Then for any $\bm{x}\in\R^d$, we have
$$
(A\otimes \bm{x})_{i}=\max_{j=1}^{d}a_{ij}+\bm{x}_{j}\geq a_{ic(i)}+\bm{x}_{c(i)}>-\infty,
$$
for all $i=1,\dots,n$. So that $R(\bm{x})<\infty$. 
\end{proof}

We say that $j\in\{1,\dots,d\}$ is \emph{col-admissible}, if 
$$
i\not\in\support(\bm{y})~\Rightarrow~a_{ij}=-\infty.
$$
We say that $i\in\support(\bm{y})$ is \emph{row-admissible}, if
$$
\exists~\hbox{col-admissible }j~:~a_{ij}>-\infty.
$$
Let $\uc{C}$ denote the set of all col-admissible indices and $\uc{R}$ denote the set of all row-admissible indices.

%
%
\begin{lemma}\label{Fflem}
Let  $A\in\Rmax^{n\times d}$ and $\bm{y}\in\Rmax^n$. Then 
\begin{equation}\label{conditionrc}
\support(\bm{x})\subseteq \uc{C}
\end{equation}
is a necessary condition for $R(\bm{x})<\infty$.
\end{lemma}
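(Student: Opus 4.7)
The plan is to prove the contrapositive: I will show that if $\support(\bm{x}) \not\subseteq \uc{C}$, then $R(\bm{x}) = \infty$. By convention \eqref{2_norm}, and since $R(\bm{x}) = \|A\otimes \bm{x} - \bm{y}\|_2^2/2$, it is enough to exhibit some index $i$ for which $i \in \support(A\otimes \bm{x})$ but $i \notin \support(\bm{y})$, since this already forces the two supports to differ and hence the residual norm to be infinite.

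First I would unpack the failure of the support inclusion. If $\support(\bm{x}) \not\subseteq \uc{C}$, pick any $j \in \support(\bm{x}) \setminus \uc{C}$. Since $j$ is not col-admissible, the defining implication fails, so there must exist some index $i \notin \support(\bm{y})$ with $a_{ij} > -\infty$. This $i$ and $j$ will be the witnesses throughout the argument.

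Next I would evaluate the $i$-th entry of $A\otimes \bm{x}$. From the definition of max-plus matrix-vector multiplication,
\begin{equation*}
(A\otimes \bm{x})_i \;=\; \max_{k=1}^{d}\bigl(a_{ik}+\bm{x}_k\bigr) \;\geq\; a_{ij} + \bm{x}_j.
\end{equation*}
Both $a_{ij}$ and $\bm{x}_j$ are finite by construction (the former by the choice of $i$ witnessing the non-col-admissibility of $j$, the latter because $j \in \support(\bm{x})$), so $(A\otimes \bm{x})_i > -\infty$, giving $i \in \support(A\otimes \bm{x})$. Combined with $i \notin \support(\bm{y})$, this yields $\support(A\otimes\bm{x}) \neq \support(\bm{y})$, and therefore $\|A\otimes\bm{x}-\bm{y}\|_2 = \infty$ by \eqref{2_norm}, so $R(\bm{x}) = \infty$. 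This is the desired contrapositive.

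The argument is a straightforward unpacking of the definitions of col-admissibility, the max-plus product, and the convention \eqref{2_norm}, so there is no substantive obstacle; the only point worth double-checking is that the column index produced by negating col-admissibility is the \emph{same} index $j$ that lies in $\support(\bm{x})$, which is indeed how the witness was chosen in the first step.
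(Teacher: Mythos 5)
Your proof is correct and follows essentially the same route as the paper's: both argue the contrapositive by picking $j\in\support(\bm{x})\setminus\uc{C}$, extracting a witness row $i$ with $\bm{y}_i=-\infty$ and $a_{ij}>-\infty$ from the failure of col-admissibility, and bounding $(A\otimes\bm{x})_i\geq a_{ij}+\bm{x}_j>-\infty$ to force a support mismatch. No gaps.
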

\begin{proof}
Suppose there exists $j\in\{1,\dots,d\}$ such that $j\not\in \uc{C}$ but $\bm{x}_j>-\infty$. By the definition of col-admissibility there must exist $i\in\{1,\dots,n\}$, such that $\bm{y}_{i}=-\infty$ and $a_{ij}>-\infty$. Therefore
$$
(A\otimes\bm{x})_{i}=\max_{k=1}^{d}a_{ik}+\bm{x}_{k}\geq a_{ij}+\bm{x}_{j}>-\infty
$$
and since $\bm{y}_{i}=-\infty$ we have $R(\bm{x})=\infty$.
\end{proof}

\begin{theorem}\label{ffreduction}
Let  $A\in\Rmax^{n\times d}$ and $\bm{y}\in\Rmax^n$.
\begin{enumerate}
 \item If $\uc{R}\neq \support(\bm{y})$, then 
$R(\bm{x})=\infty$, for all $\bm{x}\in\Rmax^d$. 

\item If  $\uc{R}= \support(\bm{y})$, then whenever $\bm{x}\in\R^d$ satisfies \eqref{conditionrc}, we have
$$
R(\bm{x})=\|A_{\uc{R}\uc{C}}\otimes \bm{x}_{\uc{C}}-\bm{y}_{\uc{R}}\|_{2}.
$$
where $A_{\uc{C},\uc{R}}$ denotes the sub-matrix of $A$ formed from the rows in $\uc{R}$ and columns in $\uc{C}$, likewise for the sub-vector $\bm{y}_{\uc{R}}$. Moreover $A_{\uc{C},\uc{R}},\bm{y}_{\uc{R}}$ are a finite form.
\end{enumerate}
\end{theorem}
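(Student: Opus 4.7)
The plan is to split into the two parts of the statement and, in both, use Lemma~\ref{Fflem} as the first line of defence and then analyze each row of $A\otimes\bm{x}$ by partitioning the columns into the col-admissible block $\uc{C}$ and its complement. The common trick is that a term $a_{ij}+\bm{x}_j$ is forced to $-\infty$ either because $j\in\uc{C}$ kills $a_{ij}$ (when $i$ is not row-admissible or $i\notin\support(\bm{y})$), or because $j\notin\uc{C}$ kills $\bm{x}_j$ under condition \eqref{conditionrc}. This pair of observations is really the only combinatorial content of the argument.

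For part~1, fix any $\bm{x}\in\Rmax^d$. If $\support(\bm{x})\not\subseteq\uc{C}$ then Lemma~\ref{Fflem} already yields $R(\bm{x})=\infty$. Otherwise, pick $i\in\support(\bm{y})\setminus\uc{R}$, which exists by hypothesis. Since $i$ is not row-admissible, every col-admissible $j$ satisfies $a_{ij}=-\infty$; and every $j\notin\uc{C}$ satisfies $\bm{x}_j=-\infty$. Therefore $(A\otimes\bm{x})_i=-\infty$ while $\bm{y}_i$ is finite, so the supports of $A\otimes\bm{x}$ and $\bm{y}$ disagree and the convention \eqref{2_norm} forces $R(\bm{x})=\infty$.

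For part~2, the work is to show $\support(A\otimes\bm{x})=\support(\bm{y})$, after which \eqref{2_norm} reduces $R(\bm{x})$ to a classical Euclidean norm on the common support $\uc{R}$. The row-by-row argument mirrors part~1: for $i\notin\support(\bm{y})$ both sub-cases send $a_{ij}+\bm{x}_j$ to $-\infty$, whereas for $i\in\uc{R}$ row-admissibility supplies a col-admissible $j$ with $a_{ij}>-\infty$, and since $\bm{x}\in\R^d$ makes $\bm{x}_j$ finite we get $(A\otimes\bm{x})_i>-\infty$. On the common support, terms from $j\notin\uc{C}$ contribute $-\infty$ and drop out of the max, giving $(A\otimes\bm{x})_i=(A_{\uc{R}\uc{C}}\otimes\bm{x}_{\uc{C}})_i$ for each $i\in\uc{R}$, which is exactly the claimed identity.

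Finally, it remains to verify that the pair $A_{\uc{R}\uc{C}},\bm{y}_{\uc{R}}$ is a finite form: $\bm{y}_{\uc{R}}$ has full support by construction, since $\uc{R}\subseteq\support(\bm{y})$, and each row $i\in\uc{R}$ of $A_{\uc{R}\uc{C}}$ contains at least one finite entry because row-admissibility of $i$ produces a col-admissible column $j\in\uc{C}$ with $a_{ij}>-\infty$. The proof is essentially bookkeeping; the only substantive care needed is to use the right half of each definition at the right moment, in particular to notice that the hypothesis $\bm{x}\in\R^d$ in part~2 is precisely what makes the lower bound $(A\otimes\bm{x})_i\geq a_{ij}+\bm{x}_j$ strictly larger than $-\infty$, so no genuine obstacle arises.
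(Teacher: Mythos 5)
Your proof is correct and follows essentially the same route as the paper's: invoke Lemma~\ref{Fflem} when \eqref{conditionrc} fails, then argue row by row, splitting columns into $\uc{C}$ versus its complement to force the relevant entries of $A\otimes\bm{x}$ to $-\infty$, and verify the finite form via row-admissibility. The only cosmetic difference is that you explicitly establish $\support(A\otimes\bm{x})=\support(\bm{y})$ in part~2, where the paper contents itself with the inclusion $\support(A\otimes\bm{x})\subseteq\support(\bm{y})$; both suffice for the claimed identity.
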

\begin{proof}
For the first part first suppose that \eqref{conditionrc} is not satisfied then $R(\bm{x})=\infty$. Now suppose that  \eqref{conditionrc} is satisfied. From $R\neq\support(\bm{y})$, we have that there exists $i$ with $\bm{y}_{i}>-\infty$ and $i\not\in\uc{R}$. From the definition of row-admissibility, we have $a_{ij}=-\infty$, for all $j\in\uc{C}$. So that $(A\otimes \bm{x})_{i}=-\infty$, unless $\bm{x}_{k}>-\infty$, for some $k\not\in\uc{C}$, and this would violate \eqref{conditionrc}. Therefore $(A\otimes \bm{x})_{i}=-\infty$ and $\bm{y}_{i}>-\infty$, so $R(\bm{x})=\infty$. 

For the second part, from \eqref{conditionrc} and the definition of col-admissibility we have that
$$
\support(A\otimes \bm{x})\subseteq \support(\bm{y}).
$$
So from the assumption $\uc{R}=\support(\bm{y})$ and \eqref{2_norm} we have
$$
R(\bm{x})=\|A\otimes \bm{x}-\bm{y}\|_{2}=\|(A\otimes \bm{x})_{\uc{R}}-\bm{y}_{\uc{R}}\|_{2},
$$
and applying  \eqref{conditionrc} a second time we have
$$
R(\bm{x})=\|A_{\uc{R}\uc{C}}\otimes \bm{x}_{\uc{C}}-\bm{y}_{\uc{R}}\|_{2}.
$$
For the last part. From the assumption $\uc{R}= \support(\bm{y})$, we have that every entry in $\bm{y}_{\uc{R}}$ is finite. Now consider an arbitrary row of $A_{\uc{R}\uc{C}}$. This will be the $i$th row of $A$, for some $i\in\uc{R}$. From the definition of row-admissibility we have that there exists $j\in\uc{C}$, such that $a_{ij}>-\infty$, and this will correspond to a finite entry in the chosen row of $A_{\uc{R}\uc{C}}$. Therefore $A_{\uc{C},\uc{R}},\bm{y}_{\uc{R}}$ are a finite form.

\end{proof}

Theorem~\ref{ffreduction} tells us that given any max-plus $2$-norm regression problem, either no solution with finite residual exists, or the problem can be reduced to a smaller problem in finite form, for which Theorem~\ref{ffthm} guarantees that finite solutions with finite residuals exist. 

\subsubsection{Patterns of support and their domains}\label{pat1}

For $A\in\Rmax^{n\times d}$ and $\bm{x}\in\Rmax^d$ define the \emph{pattern of support} $\pattern(\bm{x})=(P_{1},\dots,P_{n}) \in \uc{P}\big(\{1,\dots,d\}\big)^n$, by
\begin{equation}
 j\in P_{i} \quad \Leftrightarrow \quad  a_{ij}+\bm{x}_{j}=(A\otimes \bm{x})_{j},
\end{equation}
for $i=1,\dots,n$. Define the \emph{domain} of a pattern $P\in \uc{P}\big(\{1,\dots,d\}\big)^n$, by 
\begin{equation}
X(P)=\{\bm{x}\in\Rmax^d~:~\pattern(\bm{x})=P\}.
\end{equation}
We say that a pattern $P$ is \emph{feasible}, if $X(P)\cap \R^d \neq \emptyset$. For a pattern $P$ define the binary relation $\tilde{\bowtie}_P$ on $\{1,\dots,d\}$, by $j\tilde{\bowtie}_P k$, if and only $j,k\in P_{i}$, for some $i=1,\dots,n$ and define $\bowtie_{P}$ to be the transitive closure of $\tilde{\bowtie}_P$. Let $|\bowtie_{P}|$ be the number of equivalence classes of $\bowtie_{P}$. If $P$ is a feasible pattern then $X(P)$ is a set of dimension $|\bowtie_{P}|$.  From \cite[Cor.~25]{73810} we have that
\begin{equation}\label{numer}
|\{\hbox{feasible $P$} ~:~ |\bowtie_{P}|=k\}|\leq \frac{(n+d-k-1)!}{(n-k)!\cdot (d-k)! \cdot (k-1)!},
\end{equation}
for $k=1,\dots,d$, with equality for all $k$ in the generic case of a matrix $A$ with rows/cols in general position. 

Define the ordering $\preceq$ on $\uc{P}\big(\{1,\dots,d\}\big)^n$, by $P\preceq P'$, if and only if $P_{i}\subseteq P_{i}'$, for all $i=1,\dots,n$, with a strict inequality if at least one inclusion is a strict inclusion. Then the boundary of the domain $X(P)$ is given by $\cup_{\{P'~:~P\prec P' \}}X(P')$ and the closure by $\Cl\big(X(P)\big)=\cup_{\{P'~:~P\preceq P' \}}X(P')$.


Also define the \emph{feasibility matrix} by $F_{P} \in\Rmax^{d\times d}$, by
\begin{equation}\label{feasibilitymatrixdef}
f_{jk}=\left\{\begin{array}{cc} 0, & \hbox{for $j=k$,} \\ \max\big\{-\infty,\max \{a_{ik}-a_{ij}~:~ j\in P_{i}~:~i=1,\dots,n\}\big\},   & \hbox{otherwise.}\end{array}\right.
\end{equation}

We will need to quickly review some related results to support the following Theorem. For a max-plus matrix $B\in \Rmax^{d\times d}$, the \emph{maximum cycle mean} of $B$ is defined by
\begin{equation}
\lambda(B)=\max_{\zeta}\frac{W(\zeta)}{L(\zeta)},
\end{equation}
where the maximum is taken over cycles $\zeta=\big(\zeta(1)\mapsto \dots \mapsto \zeta(k)\mapsto \zeta(1) \big)\subset \{1,\dots,d\}$. The \emph{weight} of a cycle is the sum of its edge weights $W(\zeta)=b_{\zeta(1)\zeta(2)}+\dots+b_{\zeta(k-1)\zeta(k)}+b_{\zeta(k)\zeta(1)}$ and the \emph{length} of a cycle is its total number of edges $L(\zeta)=k+1$. The \emph{Klene star} of $B\in \Rmax^d$ is defined by 
\begin{equation}
B^{\star}=\lim_{t \rightarrow \infty}\big(I\oplus B \oplus B^{\otimes 2}\oplus \dots \oplus B^{\otimes t}\big),
\end{equation}
where $I\in\Rmax^d$ is the max-plus identity matrix, with zeros on the diagonal and minus infinities off of the diagonal. From \cite[Prop.~1.6.10 and Thm.~1.6.18]{butk10} we have that if $\lambda(B)\leq0$, then $B^{\star}$ exists and $\{x\in\Rmax^d~:~ B\otimes x=x\} =\col(B^{\star})$ and that if $\lambda(B)>0$, then $B^{\star}$ does not exist and $\{x\in\Rmax^d~:~ B\otimes x=x\}\cap\R^d=\emptyset$. 

For $B\in\Rmax^{d\times d}$ define $\overline{B}\in\Rmax^{d}$ to be the arithmetic mean of the columns of $B$. It follows from \cite[Thm.~3.3]{ssb09} that, provided $\lambda(B)\leq 0$, we have $\overline{(B^{\star})}\in\relint\big(\col(B^{\star})\big).$

\begin{theorem}\label{feasibilitythm}
For $A\in\Rmax^{n\times d}$ and $P\in \uc{P}\big(\{1,\dots,d\}\big)^n$ we have
$$
\Cl\big(X(P)\big)=\{\bm{x}\in\Rmax^d~:~ F_{P}\otimes \bm{x}=\bm{x}\}.
$$
Moreover, the pattern $P$ is feasible, if and only if $\lambda(F_{P})=0$ and in the case where $P$ is feasible, we have
$$
X(P)=\relint \big( \col (F_{P}^{\star})\big)
$$
and $\overline{(F_{P}^{\star})}\in X(P)$.
\end{theorem}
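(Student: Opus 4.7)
The plan is to proceed in three stages, starting from a clean reformulation of the fixed-point condition $F_P\otimes \bm{x}=\bm{x}$ in terms of the pattern of support.

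First I would unpack the fixed-point equation. Because the diagonal entries of $F_P$ are zero, we automatically have $F_P\otimes\bm{x}\geq\bm{x}$ componentwise, so the equation is equivalent to the inequality $F_P\otimes\bm{x}\leq \bm{x}$. Substituting the definition \eqref{feasibilitymatrixdef} this inequality says precisely that for every pair $(i,j)$ with $j\in P_i$ and every $k$, $a_{ik}+\bm{x}_k\leq a_{ij}+\bm{x}_j$, i.e.\ that $j$ attains the max in $(A\otimes\bm{x})_i$. So $F_P\otimes\bm{x}=\bm{x}$ holds if and only if $P_i\subseteq \pattern(\bm{x})_i$ for all $i$, i.e.\ $P\preceq\pattern(\bm{x})$. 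Combining this with the closure description $\Cl\big(X(P)\big)=\bigcup_{P\preceq P'}X(P')$ recorded earlier in this subsection gives the first claim. This step is routine once the reformulation is stated carefully; the only subtlety is extending the argument to $\bm{x}\in\Rmax^d$ with possibly infinite entries, which is handled by the closure machinery from Subsection~\ref{cl}.

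Next I would handle the feasibility criterion. Since $f_{jj}=0$, the graph of $F_P$ contains loops of weight zero, so $\lambda(F_P)\geq 0$ always. If $P$ is feasible pick any $\bm{x}\in X(P)\cap\R^d$; by the first part $\bm{x}$ is a finite solution of $F_P\otimes\bm{x}=\bm{x}$, which by the result cited from \cite{butk10} forces $\lambda(F_P)\leq 0$, hence $\lambda(F_P)=0$. Conversely, assuming $\lambda(F_P)=0$, the same cited result gives existence of $F_P^{\star}$ with $\col(F_P^{\star})=\{\bm{x}\in\Rmax^d:F_P\otimes \bm{x}=\bm{x}\}$, and by the first part this set equals $\Cl\big(X(P)\big)$.

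The main obstacle is then identifying $X(P)$ with $\relint\big(\col(F_P^{\star})\big)$. My plan is to use the chain
\begin{equation*}
\Cl\big(X(P)\big)=\bigcup_{P\preceq P'}X(P'),\qquad X(P)=\Cl\big(X(P)\big)\setminus\bigcup_{P\prec P'}X(P'),
\end{equation*}
so that $X(P)$ is exactly the complement inside $\Cl(X(P))=\col(F_P^{\star})$ of its proper faces, i.e.\ the open stratum of the cone. On the other side, a point lies in the relative interior of the max-plus cone $\col(F_P^{\star})$ exactly when, with respect to the face lattice of that cone, it is not contained in any lower-dimensional face, and each proper face corresponds to imposing additional binding inequalities among the defining relations $a_{ik}+\bm{x}_k\leq a_{ij}+\bm{x}_j$. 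Showing that these two stratifications agree — the combinatorial stratification by patterns $P'\succ P$ on the one hand, and the geometric stratification by proper faces of $\col(F_P^{\star})$ on the other — is the delicate step, and will be proved by arguing that each strict refinement $P\prec P'$ corresponds to activating at least one previously non-binding inequality, and conversely each face of $\col(F_P^{\star})$ arises this way. Finally, the last assertion $\overline{(F_P^{\star})}\in X(P)$ is immediate by invoking the cited result from \cite{ssb09}, which places $\overline{(F_P^{\star})}$ in $\relint\big(\col(F_P^{\star})\big)=X(P)$.
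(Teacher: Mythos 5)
Your proposal is correct and follows essentially the same route as the paper: the fixed-point equation is unpacked via the zero diagonal exactly as in the paper's proof, feasibility is settled by the same citation of \cite[Prop.~1.6.10 and Thm.~1.6.18]{butk10}, and the final assertion is the same appeal to \cite[Thm.~3.3]{ssb09}. The one step you flag as delicate --- matching the pattern stratification with the face stratification of $\col(F_{P}^{\star})$ --- is dispatched in the paper in a single line by invoking the boundary/closure description $\Cl\big(X(P)\big)=\cup_{P\preceq P'}X(P')$ and boundary $\cup_{P\prec P'}X(P')$ already recorded in Subsection~\ref{pat1}, so that $X(P)$ is closure minus boundary, i.e.\ the relative interior; your planned argument is just a more explicit verification of that recorded fact.
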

\begin{proof}
First note that $\bm{x}\in \Cl\big(X(P)\big)$, if and only if $(A\otimes \bm{x})_{i}=a_{ij}+\bm{x}_{j}$, for all $j\in P_{i}$, for all $i=1,\dots,n$, which is equivalent to $\max_{k=1}^{d}(a_{ik}+\bm{x}_{k})\leq a_{ij}+\bm{x}_{j}$, for all $j\in P_{i}$, for all $i=1,\dots,n$, which is equivalent to $F_{P}\otimes \bm{x}\leq \bm{x}$ and since $F_{P}$ has zeros on its diagonal this is equivalent to $F_{P}\otimes \bm{x}=\bm{x}$. Next from \cite[Prop.~1.6.10 and Thm.~1.6.18]{butk10}, we have that $\Cl\big(X(P)\big)\cap \R^d$ is non-empty, if and only if $\lambda(F_{P})\leq 0$ and since $F_{P}$ has zeros on the diagonal this is equivalent to the condition $\lambda(F_{P})=0$. In the case that $\lambda(F_{P})=0$ we also have $ \Cl\big(X(P)\big)=\col(F_{P}^{\star})$. Then note that
$$
X(P)=\cup_{\{P'~:~P\preceq P'\}}X(P')\big\backslash \cup_{\{P'~:~P\prec P'\}}X(P')
$$
is equal to $\col(F_{P}^{\star})$ minus its boundary, which is precisely $\relint \big( \col (F_{P}^{\star})\big)$. The final result follows immediately from  \cite[Thm.~3.3]{ssb09}. 
\end{proof}

If there are $m$ equivalence classes in $\bowtie_{P}$ then label them arbitrarily with $\{1,\dots,m\}$ and define $c:\{1,\dots,d\}\mapsto\{1,\dots,m\}$, such that $c(j)=k$, if and only if $j$ is in the $k$th equivalence class. Let $\#_{k}$ denote the number of elements in the $k$th equivalence class and define $C\in\R ^{d\times m}$, by 
\begin{equation}\label{cmatrix}
c_{jc(j)}=\frac{1}{\sqrt{\#_{c(j)}}}, 
\end{equation}
for $j=1,\dots,d$ and all other entries equal to zero. Then for any $\bm{x}_{P}\in \Cl\big(X(P)\big)\cap \R^d$, we have that
\begin{equation}\label{exdom}
\uc{A}\big(X(P)\cap \R^d\big)=\{C\bm{h}+\bm{x}_{P}~:~\bm{h}\in\R^m\}
\end{equation}
is the smallest affine subspace of $\Rmax^d$ containing $X(P)\cap \R^d$. We could choose $\bm{x}_{P}=\overline{(F^{\star}_{P})}$ but also need to consider the case where $\bm{x}_{P}$ represents the current state of one of the algorithms that we detail later. We call $\Cl \Big(\uc{A}\big(X(P)\cap \R^d \big)\Big)$ the \emph{extended domain} of $P$. 

\subsubsection{Local maps and images}\label{pat2}

Now define the \emph{subpattern} $p$ of $P$, by $p_{i}=\{\min(P_{i})\}$, for $i=1,\dots,n$. Since each element of a subpattern contains only a single element we can treat subpatterns like vectors and will write $p_{i}$ to mean the unique element of the set $p_{i}$. Also define $L\in\{0,1\}^{n\times d}$, by $l_{ip(i)}=1$ and all other entries equal to zero. Then define the \emph{local mapping} $A_{P}:\Rmax^{d} \mapsto \Rmax^n$, for real $\bm{x}\in\R^d$ by
\begin{equation}\label{localmapdef}
A_{P}(\bm{x})=L\bm{x}+\bm{y}_{P},
\end{equation}
where $(\bm{y}_{P})_{i}=a_{ip(i)}$, for $i=1,\dots,n$ and continue $A_{P}(\cdot)$ to $\Rmax^n$ via \eqref{mpexp}. It follows that $\support(\bm{y}_{P})=\{1,\dots,n\}$, whenever $A,\bm{y}$ are a finite form and $P$ is a feasible pattern. Note that $A_{P}(\bm{x})=A\otimes\bm{x}$, for all $\bm{x}\in \Cl\big(X(P)\big)$.

Define the \emph{image} $Y(P)=A_{P}\big(X(P)\big)$. Then we have 
\begin{equation}
\col(A)=\bigcup_{P}Y(P),
\end{equation}
where the union is taken over all feasible patterns.  We have that 
\begin{equation}
\uc{A}\big(Y(P)\cap \R^n \big)=\{LC\bm{h}+L\bm{x}_{P}+\bm{y}_{P}:~\bm{h}\in\R^m\},
\end{equation}
is the smallest affine subspace of $\R^n$ containing $Y(P)\cap \R^n$. We call  $\Cl\Big(\uc{A}\big(Y(P)\cap \R^n\big)\Big)$ the \emph{extended image} of $P$.

\subsubsection{Normal projections}\label{pat3}

For a feasible pattern $P$ define the \emph{normal projection} map $\Phi(P,\cdot):\Rmax^n\mapsto\Rmax^n$, by
\begin{equation}
\Phi(P,\bm{y})=\arg\min\{\|\bm{y}-\bm{y}'\|_{2}~:~\bm{y}'\in\Cl\Big(\uc{A}\big(Y(P)\cap \R^n\big)\Big)\}.
\end{equation}
Then for real $\bm{y}\in \R^n$ we have 
\begin{equation}
\Phi(P,\bm{y})=LC\bm{h}^{\ast}+L\bm{x}_{P}+\bm{y}_{P},
\end{equation} 
where the normal equations \cite{lsbook} give
\begin{equation}
\bm{h}^{\ast}=\big((LC)^{\top}LC\big)^{\dagger}(LC)^{\top}(\bm{y}-L\bm{x}_{P}-\bm{y}_{P}).
\end{equation}
Note that $LC\in\R^{n\times m}$ with 
\begin{equation}
(LC)_{i~c \big( p(i)\big)}=\frac{1}{\sqrt{\#_{c\big(p(i)\big)}}}
\end{equation}
and all other entries equal to zero. Hence we have that
\begin{equation}\label{iver}
\bm{h}^{\ast}_{k}=\sqrt{{\#_{k}}}~\times ~\overline{\{(\bm{y}-L\bm{x}_{P}-\bm{y}_{P})_{i} ~:~c \big( p(i)\big)=k\}},
\end{equation}
for $k=1,\dots,m$ such that $\{i~:~c \big( p(i)\big)=k\}\neq\emptyset$, and $\bm{h}^{\ast}_{k}=0$ otherwise, and where the overline in \eqref{iver} indicates taking the mean. Define the equivalence relation $\hat{\bowtie}_{P}$ on $\{1,\dots,n\}$, by $i\hat{\bowtie}_{P}j$, if and only if $p(i)\bowtie_{P} p(j)$. Then we have
\begin{equation}
\Phi(P,\bm{y})_{i}=\overline{\{(\bm{y}-L\bm{x}_{P}-\bm{y}_{P})_{j}~:~ i\hat{\bowtie}_{P} j\}}+(L\bm{x}_{P}+\bm{y}_{P})_{i},
\end{equation}
for $i=1,\dots,n$. We continue $\Phi(P,\cdot)$ to $\Rmax^n$ via \eqref{mpexp}. We say that $\Phi(P,y)$ is \emph{admissible}, if $\Phi(P,\bm{y})\in \Cl\big(Y(P)\big)$.

Also define 
\begin{equation}
A^{-1}_{P}\big(\Phi(P,\bm{y})\big) =\{\bm{x}\in\Cl\Big(\uc{A}\big(X(P)\cap \R^d \big)\Big)~:~A_{P}(\bm{x})=\Phi(P,\bm{y})\} 
\end{equation}
where for  real $y\in\R^n$, we have
\begin{equation}
A^{-1}_{P}\big(\Phi(P,\bm{y})\big) =\Cl\big(C\bm{h}^{\ast}+C\ker(LC)+\bm{x}_{P}\big).
\end{equation}
where 
\begin{equation}
(C\bm{h}^{\ast})_{j}=\overline{\{(\bm{y}-L\bm{x}_{P}-\bm{y}_{P})_{i}~:~c \big( p(i)\big)=c(j)\}},
\end{equation}
for $j=1,\dots,d$ and
\begin{equation}
C~\hbox{ker}(LC)=\hbox{span}\{\underline{e}_{j}\in\Rmax^d~:~ j\in \{1,\dots,d\}/\support(P) \},
\end{equation}
where $\support(P)\subset\{1,\dots,d\}$ is the \emph{support} of the pattern $P$, defined by $\support(P)=\cup_{i=1}^{n}P_{i}$.

Define the \emph{closest local minimum} map $\Psi(P,\bm{y},\cdot):\Rmax^d\mapsto\Rmax^d$, for real $\bm{y}\in \R^n$ by 
\begin{equation}
\Psi(P,\bm{y},\bm{x})=\arg\min\{\|\bm{x}-\bm{x}'\|_{2}~:~\bm{x}'\in A^{-1}_{P}\big(\Phi(P,\bm{y})\big)\}.
\end{equation}
Then for real $\bm{x}\in\R^d$, we have 
\begin{equation}\label{ggg}
\Psi(P,\bm{y},\bm{x})_{j}=\left\{\begin{array}{cc} (C\bm{h}^{\ast})_{j} &\hbox{ for $j\in\support(P)$}, \\ {x}_{j} & \hbox{otherwise,}\end{array}\right.
\end{equation}
for $j=1,\dots,d$. For all $\bm{y}\in\R^n$ we continue $\Psi(P,\bm{y},\cdot)$ to $\Rmax^d$ via \eqref{mpexp}, then for all $\bm{x}\in\Rmax^d$ we continue $\Psi(P,\cdot,\bm{x})$ to $\Rmax^n$ via \eqref{mpexp}.

\begin{theorem}\label{admissthm}Let $A\in\Rmax^{n\times d},\bm{y}\in\Rmax^n$ be a finite form and let $P\in \uc{P}\big(\{1,\dots,d\}\big)^n$ be a  feasible pattern. Then the normal projection $\Phi(P,\bm{y})$ is admissible, if and only if $\Psi(P,\bm{y},\underline{-\infty})\in \Cl\big(X(P)\big)$, where $\underline{-\infty}\in\Rmax^d$ is a vector with all entries equal to $-\infty$.
\end{theorem}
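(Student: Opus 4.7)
The plan is to exploit a structural feature of $\Psi(P,\bm{y},\underline{-\infty})$: by the continuation of \eqref{ggg} it is the element of $A_{P}^{-1}\big(\Phi(P,\bm{y})\big)$ whose entries outside $\support(P)$ all equal $-\infty$, with entries on $\support(P)$ determined by $\Phi(P,\bm{y})$. Once this is noted, the theorem should reduce to showing that membership in $\Cl\big(X(P)\big)$ is preserved under lowering the entries outside $\support(P)$ to $-\infty$.

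The core step will be the following structural observation, which I would prove first. Given any $\bm{x}\in \Cl\big(X(P)\big)$, let $\tilde{\bm{x}}$ denote the vector with $\tilde{\bm{x}}_{j}=\bm{x}_{j}$ for $j\in\support(P)$ and $\tilde{\bm{x}}_{j}=-\infty$ for $j\notin\support(P)$; the claim is that $\tilde{\bm{x}}\in\Cl\big(X(P)\big)$. Using the characterization $\Cl\big(X(P)\big)=\{\bm{x}\in\Rmax^d~:~F_{P}\otimes\bm{x}=\bm{x}\}$ from Theorem~\ref{feasibilitythm} together with definition \eqref{feasibilitymatrixdef}, I observe that any row $j\notin\support(P)$ of $F_{P}$ has only the diagonal entry $f_{jj}=0$ finite (since $j\in P_{i}$ never holds), so the equation $(F_{P}\otimes\tilde{\bm{x}})_{j}=\tilde{\bm{x}}_{j}$ is automatic for such rows. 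For $j\in\support(P)$, dropping the terms $f_{jk}+\bm{x}_{k}$ with $k\notin\support(P)$ (they collapse to $-\infty$ in $\tilde{\bm{x}}$) can only decrease the max, so $(F_{P}\otimes\tilde{\bm{x}})_{j}\leq(F_{P}\otimes\bm{x})_{j}=\bm{x}_{j}=\tilde{\bm{x}}_{j}$, while the diagonal term $f_{jj}+\tilde{\bm{x}}_{j}=\tilde{\bm{x}}_{j}$ supplies the reverse inequality.

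With this lemma in hand both directions are short. For $(\Leftarrow)$, since $\Psi(P,\bm{y},\cdot)$ takes values in $A_{P}^{-1}\big(\Phi(P,\bm{y})\big)$ by construction, the hypothesis gives $\Phi(P,\bm{y})=A_{P}(\Psi(P,\bm{y},\underline{-\infty}))\in A_{P}\big(\Cl(X(P))\big)\subseteq\Cl\big(Y(P)\big)$. For $(\Rightarrow)$, continuity of the continued map $A_{P}$ together with compactness of $\Cl\big(X(P)\big)$ in the max-plus topology provides some $\bm{x}'\in\Cl\big(X(P)\big)$ with $A_{P}(\bm{x}')=\Phi(P,\bm{y})$; this $\bm{x}'$ lies in $A_{P}^{-1}\big(\Phi(P,\bm{y})\big)$ and hence agrees with $\Psi(P,\bm{y},\underline{-\infty})$ on $\support(P)$. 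Applying the structural lemma to $\bm{x}'$ then yields $\Psi(P,\bm{y},\underline{-\infty})\in\Cl\big(X(P)\big)$.

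The main obstacle will be handling the continued maps carefully and in particular justifying the equality $\Cl\big(Y(P)\big)=A_{P}\big(\Cl(X(P))\big)$ invoked in the forward direction; the structural lemma itself is straightforward once one notices that the rows of $F_{P}$ outside $\support(P)$ are trivial, but it is the engine of the whole argument.
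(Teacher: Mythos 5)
Your argument is correct and follows essentially the same route as the paper: the backward direction is identical, and your structural lemma (that setting the coordinates outside $\support(P)$ to $-\infty$ preserves membership in $\Cl\big(X(P)\big)$) is exactly the paper's inline computation --- comparing $F_{P}\otimes \Psi(P,\bm{y},\underline{-\infty})$ with $F_{P}\otimes\hat{\bm{x}}$ via monotonicity on the indices in $\support(P)$ and using the triviality of the remaining rows of $F_{P}$ --- repackaged as a standalone statement. The one step you flag as the main obstacle, extracting a preimage $\bm{x}'\in\Cl\big(X(P)\big)$ of $\Phi(P,\bm{y})$ from admissibility, is asserted without further justification in the paper as well, so your proposal matches the paper's proof in both substance and level of rigor.
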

\begin{proof}
First suppose that $\Psi(P,\bm{y},\underline{-\infty})\in\Cl\big(X(P)\big)$. By definition we have that $A\otimes \Psi(P,\bm{y},\underline{-\infty})=\Phi(P,\bm{y})$ and from the supposition we have that $A\otimes \Psi(P,\bm{y},\underline{-\infty})\in \Cl\big(Y(P)\big)$. Therefore $\Phi(P,\bm{y})\in \Cl\big(Y(P)\big)$, or in words, $\Phi(P,\bm{y})$ is admissible.

Conversely suppose that $\Phi(P,\bm{y})$ is admissible. Note that from Theorem~\ref{feasibilitythm} we have that $\Psi(P,\bm{y},\underline{-\infty})\in X(P)$, if and only if
\begin{equation}\label{thmcondition}
F_{P}\otimes \Psi(P,\bm{y},\underline{-\infty})= \Psi(P,\bm{y},\underline{-\infty}).
\end{equation}
From the supposition we have that there exists $\hat{\bm{x}}\in \Cl\big(X(P)\big)$, such that $A\otimes \hat{\bm{x}}=\Phi(P,\bm{y})$. Next note that since the feasibility matrix has zeros on its diagonal it suffices to show that 
$$
F_{P}\otimes \Psi(P,\bm{y},\underline{-\infty})\leq \Psi(P,\bm{y},\underline{-\infty}).
$$
Using the fact that $\Psi(P,\bm{y},\underline{-\infty})$ is the infimum of $A^{-1}_{P}\big(\Phi(P,\bm{y})\big)$ with respect to the standard partial order, that $(F_{P}\otimes \bm{x})_{k}$ is a monotonically non-decreasing function of $\bm{x}$, for all $k\in\support(P)$, and \eqref{ggg}, we have
$$
\big(F_{P}\otimes \Psi(P,\bm{y},\underline{-\infty})\big)_{k} \leq \big(F_{P}\otimes \hat{\bm{x}}\big)_{k}= \hat{\bm{x}}_{k}= (C\bm{h}^{\ast})_{j}=\Psi(P,\bm{y},\underline{-\infty})_{k},
$$
for $k\in\support(P)$. Funally note from \eqref{ggg}, that for $j\in\{1,\dots,d\}\backslash\support(P)$, we have 
$$
\big(F_{P}\otimes \Psi(P,\bm{y},\underline{-\infty})\big)_{j}=\Psi(P,\bm{y},\underline{-\infty})_{j}.
$$
Therefore \eqref{thmcondition} holds and $\Psi(P,\bm{y},\underline{-\infty})\in \Cl\big(X(P)\big)$.

\end{proof}

\subsubsection{Residual surface}\label{pat4}

For $A\in\Rmax^{n\times d}$ and $\bm{y}\in\Rmax^n$ recall the \emph{residual} $R:\Rmax^d\mapsto\mathbb{R}$, with $R(\bm{x})=\|A\otimes\bm{x} -\bm{y}\|_{2}^{2}/2$. For a feasible pattern $P$ define the \emph{local residual} $R_{P}: \Rmax^d \mapsto \mathbb{R}$, by 
\begin{equation}
R_{P}(\bm{x})=\|A_{P}(\bm{x})-\bm{y}\|_{2}^2/2.
\end{equation}
For real $\bm{x}\in\R^d$ we have
$$
R_{P}(\bm{x})=\|L\bm{x}+\bm{y}_{P}-\bm{y}\|_{2}^2/2,
$$
Note that for $\bm{x}\in\Cl\big(X(P)\big)$, we have $R(\bm{x})=R_{P}(\bm{x})$. Hence $R$ is piecewise quadratic.

\begin{example}\label{2negg}

Consider 
$$
A=\left[\begin{array}{cc} 0 & 0 \\ 1 & 0 \\ 0 & 1 \end{array}\right], \quad \bm{y}=\left[\begin{array}{cc} 0  \\ 0.5  \\ 0 \end{array}\right], \quad \bm{y}'=\left[\begin{array}{cc} 0  \\ 1.5  \\ 2 \end{array}\right].
$$
Since all entries in $A$, $\bm{y}$ and $\bm{y}'$ are finite both $A,\bm{y}$ and $A,\bm{y}'$ are finite forms. There are seven feasible patterns, their domains are displayed in Figure~\ref{Pdomains}.

\begin{figure}
\begin{center}
\begin{tikzpicture}[scale=3]
    \draw [<->,thick] (0,2) node (yaxis) [above] {$x_2$}
        |- (2,0) node (xaxis) [right] {$x_1$};
    
    \draw (1.5,0) node [rotate=45] {$X\big(P(1)=(\{1\},\{1\},\{1\})\big)$};
    
    \draw (0.5-0.125,-0.5-0.125)--(2,1) node [right] {$X\big(P(2)=(\{1\},\{1\},\{1,2\})\big)$};
    
    \draw (1,0.5) node [rotate=45] {$X\big(P(3)=(\{1\},\{1\},\{2\})\big)$};
    
    \draw (-0.125,-0.125)--(1.5,1.5) node [right]{$X\big(P(4)=(\{1,2\},\{1\},\{2\})\big)$};
    
    \draw (0.5,1) node [rotate=45] {$X\big(P(5)=(\{2\},\{1\},\{2\})\big)$};
    
    \draw (-0.5-0.125,0.5-0.125)--(1,2) node [right] {$X\big(P(6)=(\{2\},\{1,2\},\{2\})\big)$};
    
     \draw (0,1.5) node [rotate=45] {$X\big(P(7)=(\{2\},\{2\},\{2\})\big)$};
    
    
  \end{tikzpicture}
  \end{center}
\caption{Domains of feasible patterns for the matrix $A$ of Example~\ref{2negg}.}\label{Pdomains}
\end{figure}
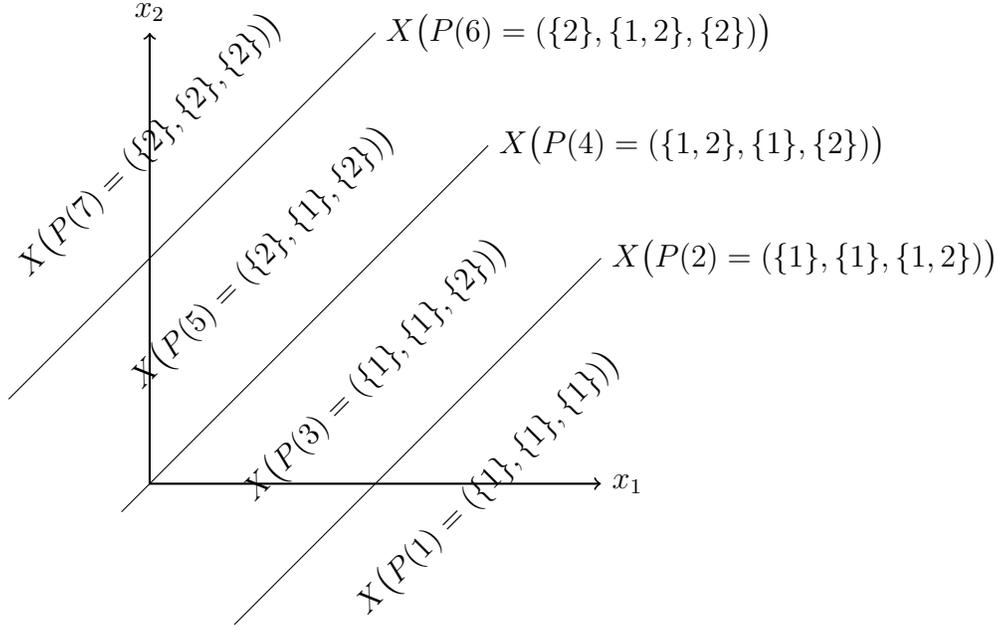

For $P=P(3)=\big(\{1\},\{1\},\{2\}\big)$, the feasibility matrix is given by
$$
F_{P}=\left[\begin{array}{cc} 0 & 0 \\ -1 & 0\end{array}\right].
$$
From Theorem~\ref{feasibilitythm}, since $\lambda(F_{P})=0$, we have that $P$ is admissible and that its domain is given by $X\big(P\big)=\relint\big(\col(F_{P}^{\star})\big)$. In this case $F_{P}^{\star}=F_{P}$ and 
$$
\col(F_{P}^{\star})=\{\bm{x}\in\Rmax^2~:~ x_{2}+1\geq x_{1}\geq x_{2} \}, \quad \relint\big(\col(F_{P}^{\star})\big)=\{\bm{x}\in\Rmax^2~:~ x_{2}+1> x_{1}> x_{2}\}.
$$
The boundary of $X(P)$ is given by $X\big(P(2)\big)\cup X\big(P(4)\big)$. These patterns are both feasible and
\begin{align*}
F_{P(2)}=\left[\begin{array}{cc} 0 & 1 \\ -1 & 0\end{array}\right], \quad &X\big(P(2)\big) = \relint\big(\col(F_{P(2)}^{\star})\big)=\col(F_{P(2)}^{\ast})=\{\bm{x}\in\Rmax^2~:~ x_{1}= x_{2}+1\}, \\ F_{P(4)}=\left[\begin{array}{cc} 0 & 0 \\ 0 & 0\end{array}\right], \quad &X\big(P(4)\big) = \relint\big(\col(F_{P(4)}^{\star})\big)=\col(F_{P(4)}^{\ast})=\{\bm{x}\in\Rmax^2~:~ x_{1}= x_{2}\}.
\end{align*}

\medskip

For $P=\big(\{1,2\},\{2\},\{1\}\big)$, we have
$$
F_{P}=\left[\begin{array}{cc} 0 & 1 \\ 1 & 0\end{array}\right].
$$
Since $\lambda(F_{P})=1$, we have that $P$ is not admissible. 

\medskip

Note that $\bm{z}\in\col(A)$, if and only if $\bm{z}\otimes \alpha=[z_{1}+\alpha,z_{2}+\alpha,z_{3}+\alpha]^{\top}\in\col(A)$, for all $\alpha\in\Rmax$. Therefore $\col(A)$ has translational symmetry in the $[1,1,1]^{\top}$ direction. Similarly for all of the pattern images and extended images. We can therefore study these objects by examining their image under the orthogonal projection $\Pi:\Rmax^3\mapsto\{\bm{z}\in\Rmax^3 ~:~[1,1,1]\bm{z}=0\}$. This is the same idea as in the tropical projected space $\mathbb{TP}^2$, which is usually taken to be a projection onto $\{\bm{y}\in\Rmax^3 ~:~[1,0,0]\bm{y}=0\}$. However the orthogonal projection that we use here is  more convenient for analyzing the $2$-norm regression problem. The projected pattern images,  extended images and a sample of normal projections are displayed in Figure~\ref{Pimages}.

\medskip

We return our attention to the pattern $P=P(3)=\big(\{1\},\{1\},\{2\}\big)$ and set
$$
\bm{x}_{P}=\overline{F_{P}^{\star}}=\overline{F_{P}}=\left[\begin{array}{c} 0 \\ -0.5 \end{array}\right].
$$
We have that $\bowtie_{P}$ is the identity relation with equivalence classes $\{1\}$ and $\{2\}$, which we label as the 1st and 2nd classes respectively. So that $c(1)=1$ and $c(2)=2$ and $C$ is the $2\times 2$ identity matrix. The extended domain is therefore given by
$$
\Cl\Big(\uc{A}\big(X(P)\cap \R^2 \big)\Big)=\Cl\Big(\{C\bm{h}+\bm{x}_{P}~:~ \bm{h}\in \R^2\}\Big)=\Cl(\R^2)=\Rmax^2.
$$
The subpattern of $P$ is given by $p=P$, since $P$ already has exactly one element in each subset. Written as a vector we have $p=[1,1,2]$. The matrix $L$ and the vector $\bm{y}_{P}$ are therefore given by
$$
L=\left[\begin{array}{cc} 1 & 0 \\ 1 & 0 \\ 0 & 1 \end{array}\right], \quad \bm{y}_{P}=\left[\begin{array}{cc} 0 \\ 1  \\  1 \end{array}\right].
$$
The local map is given by $A_{P}(\bm{x})=L\bm{x}+\bm{y}_{P}$. The extended image is given by
\begin{align*}
\Cl\Big(\uc{A}\big(Y(P)\cap \R^3 \big)\Big) &=\Cl\Big(\{ LC\bm{h}+L\bm{x}_{P}+\bm{y}_{P}~:~\bm{h}\in\R^2 \}\Big) \\ &=\Cl\big(\{ \bm{z}\in\R^3~:~ z_{2}=z_{1}+1 \}\big) \\ &=\{ \bm{z}\in\Rmax^3~:~ z_{2}=z_{1}+1 \}.
\end{align*}
and the image is given by
$$
Y(P)=\{\bm{z}\in\Rmax^3~:~ z_{2}=z_{1}+1,~ z_{3}>z_{1}>z_{3}-1 \}.
$$
The boundary of the image is given by $Y\big(P(2)\big)\cup Y\big(P(4)\big)$, where
$$
Y\big(P(2)\big)=\{\bm{z}\in\Rmax^3~:~ z_{2}=z_{1}+1,~ z_{3}=z_{1} \}, \quad Y\big(P(4)\big)=\{\bm{z}\in\Rmax^3~:~ z_{2}=z_{1}+1,~ z_{3}=z_{1}+1 \}.
$$
The normal projection is given by
$$
\Phi(P,\bm{y})=\left[\begin{array}{c} \overline{\{(\bm{y}-L\bm{x}_{P}-\bm{y}_{P})_{j}~:~ j\hat{\bowtie}_{P} 1\}} \\ \overline{\{(\bm{y}-L\bm{x}_{P}-\bm{y}_{P})_{j}~:~ j\hat{\bowtie}_{P} 2\}} \\ \overline{\{(\bm{y}-L\bm{x}_{P}-\bm{y}_{P})_{j}~:~ j\hat{\bowtie}_{P} 3\}}  \end{array}\right]+L\bm{x}_{P}+\bm{y}_{P},
$$
where
$$
\bm{y}-L\bm{x}_{P}-\bm{y}_{P}=\left[\begin{array}{c} 0 \\ -0.5 \\ -0.5 \end{array}\right], \quad L\bm{x}_{P}+\bm{y}_{P}=\left[\begin{array}{c} 0 \\ 1 \\ 0.5 \end{array}\right],
$$
and since the equivalence relation $\hat{\bowtie}_{P}$ has equivalence classes $\{1,2\}$ and $\{3\}$, we have 
$$
\Phi(P,\bm{y})=[-0.25,0.75,0]^{\top}.
$$ 
We have $\support(P)=\{1,2\}$, so that $\ker(LC)=\{0\}$ and therefore 
$$
A_{P}^{-1}\big(\Phi(P,\bm{y})\big)=\Cl\big(C\bm{h}^{\ast}+\bm{x}_{P}\big),
$$
where
$$
\bm{h}^{\ast}=\left[\begin{array}{c} \sqrt{1} \times \overline{\{(\bm{y}-L\bm{x}_{P}-\bm{y}_{P})_{i}~:~ c\big(p(i)\big)=1\}} \\ \sqrt{1} \times \overline{\{(\bm{y}-L\bm{x}_{P}-\bm{y}_{P})_{i}~:~ c\big(p(i)\big)=2\}} \end{array}\right]=\left[\begin{array}{c} -0.25 \\ -0.5 \end{array}\right],
$$ 
so that $A_{P}^{-1}\big(\Phi(P,\bm{y})\big)=\{[-0.25,-1]^{\top}\}$. Since $A_{P}^{-1}\big(\Phi(P,\bm{y})\big)$ contains a single vector we have 
$$
\Psi(P,\bm{y},\bm{x})=\arg\min\{\|\bm{x}-\bm{x}'\|_{2}~:~\bm{x}'\in A^{-1}_{P}\big(\Phi(P,\bm{y})\big)\}=[-0.25,-1]^{\top},
$$
for all $\bm{x}\in\Rmax^2$. Therefore $\Psi(P,\bm{y},\underline{-\infty})=[-0.25,-1]^{\top}$. Checking
$$
F_{P}\otimes \Psi(P,\bm{y},\underline{-\infty})=\left[\begin{array}{cc} 0 & 0 \\ -1 & 0\end{array}\right]\otimes\left[\begin{array}{cc} -0.25 \\ -1 \end{array}\right]=\left[\begin{array}{cc} -0.25 \\ -1 \end{array}\right]= \Psi(P,\bm{y},\underline{-\infty}),
$$
we verify by Theorem~\ref{admissthm} that $\Phi(P,\bm{y})$ is admissible. 

For the alternative target vector $\bm{y}'$ we have  $\Phi(P,\bm{y}')=[0.25,1.25,2]^{\top}$,
$$
A_{P}^{-1}\big(\Phi(P,\bm{y}')\big)=C\bm{h}^{\ast}=[0.25,1]^{\top}
$$
and 
$$
F_{P}\otimes \Psi(P,\bm{y}',\underline{-\infty})=\left[\begin{array}{cc} 0 & 0 \\ -1 & 0\end{array}\right]\otimes\left[\begin{array}{cc} 0.25 \\ 1 \end{array}\right]=\left[\begin{array}{cc} 1 \\ 1 \end{array}\right]> \Psi(P,\bm{y}',\underline{-\infty}),
$$
which verifies by Theorem~\ref{admissthm} that $\Phi(P,\bm{y}')$ is not admissible. See Figure~\ref{Pimages}. Note how the normal projection of $\bm{y}$ lies inside the column space of $A$ but that for the alternative target $\bm{y}'$ it does not. 

\begin{figure}
\begin{center}
\begin{tikzpicture}[scale=4]
    \draw [<->,thick] (0,1) node (yaxis) [above] {$(-x_2+x_{3})/\sqrt{2}$}
        |- (1,0) node (xaxis) [below] {$(-2 x_1+x_{2}+x_{3})/\sqrt{6}$};

    \draw[thick,fill] (-0.7071,0.4082) circle [radius=0.02cm] node [below right] {$Y_{1/2}$};
   
    \draw (-0.7071,0.4082)--(0,0.8165)[line width=2];

	\draw ( -0.3535, 0.6123)--( -0.3535, 0.6123) node [below right] {$Y_3$};
   
    \draw[thick,fill] (0,0.8165) circle [radius=0.02cm] node [ right=0.4cm ] {$Y_{4}$};   
    
    \draw ( 0.3535, 0.6123)--( 0.3535, 0.6123) node [below left] {$Y_5$};
    
  \draw[thick,fill] (0.7071,0.4082) circle [radius=0.02cm] node [below left] {$Y_{6/7}$};
    
    \draw (-0.7071,0.4082)--(0,0.8165)[line width=2];

    \draw (0,0.8165)--(0.7071,0.4082) [line width=2];
   
    \draw ( -1.4142 ,  -0.0001)--(0.7954    , 1.2759) [line width=0.5]; 
   
   \draw ( -1.0606   , 0.2041)--( -1.0606   , 0.2041) node[above left ] {$\uc{A}(Y_3)$};

     \draw ( 1.4142 ,  -0.0001)--(-0.3535   , 1.0207) [line width=0.5];

   \draw ( 1.0606   , 0.2041)--( 1.0606   , 0.2041) node[above right ] {$\uc{A}(Y_5)$};

  \draw[thick,fill] ( -0.3536, 0.2041) circle [radius=0.02cm] node [below] {$\bm{y}$};
  
    \draw( -0.3536, 0.2041)-- ( -0.5303, 0.5103) [->,thick,dashed]node [above left] {$\Phi\big(P(3),\bm{y}\big)$};

  \draw[thick,fill] ( +0.3536, 1.4289)circle [radius=0.02cm] node [above] {$\bm{y}'$};
  
      \draw( +0.3536, 1.4289) -- ( 0.5303, 1.1227) [->,thick,dashed]node [below right] {$\Phi\big(P(3),\bm{y}'\big)$};


  \end{tikzpicture}
  \end{center}
\caption{For the problem of example  Example~\ref{2negg}. Projected pattern images and extended images for the matrix $A$. $Y_{1/2}=Y\big(P(1)\big)=\uc{A}\Big(Y\big(P(1)\big)\Big)=Y\big(P(2)\big)=\uc{A}\Big(Y\big(P(2)\big)\Big)$, $Y_3=Y\big(P(3)\big)$, $Y_4=Y\big(P(4)\big)=\uc{A}\Big(Y\big(P(4)\big)\Big)$, 
$Y_5=Y\big(P(5)\big)$ and $Y_{6/7}=Y\big(P(6)\big)=\uc{A}\Big(Y\big(P(6)\big)\Big)=Y\big(P(7)\big)=\uc{A}\Big(Y\big(P(7)\big)\Big)$. Target vectors $\bm{y}$ and $\bm{y}'$ with normal projections onto $\uc{A}(Y_{3})$.
}\label{Pimages}
\end{figure}

\end{example}

\subsection{NP-hardness of finding descent directions}
For $A\in\Rmax^{n\times d}$, $\bm{y}\in\Rmax^n$ and $\bm{x}\in\Rmax^d$ we say that $\bm{z}\in\R^d$ is an \emph{descent direction} for $\bm{x}$ if there exists $\epsilon>0$ such that
\begin{equation}\label{desvent}
R(\bm{x}+\mu \bm{z})<R(\bm{x}),
\end{equation}
for all $0<\mu\leq \epsilon$.

The following problem is equivalent to determining whether $\bm{x}\in\Rmax^d$ is a local minimum of $R$. In particular $\bm{x}$ is a local minimum if and only if it does not have any descent directions. 
\begin{problem}\label{descent}
Let $A\in\Rmax^{n\times d}$, $\bm{y}\in\Rmax^n$ and $\bm{x}\in\Rmax^d$. Does there exist a descent direction for $\bm{x}$?
\end{problem}

The following problem is known as the set covering problem. 
\begin{problem}\label{setcovering}
Let $F=\big\{F_{i}\subset\{1,\dots,n\}~:~i=1,\dots,m\big\}$ be a family of subsets with $\cup_{i=1}^{m}F_{i}=\{1,\dots,n\}$ and let $1<k<m$. Does there exist a subset $J\subset\{1,\dots,m\}$, such that  $|J|\leq k$ and $\cup_{j\in J}F_{j}=\{1,\dots,n\}$?
\end{problem}

We will now prove that any instance of Problem~\ref{setcovering} can be reduced to an instance of Problem~\ref{descent}. Because the set covering problem is known to be NP-hard this suffices to show that Problem~\ref{descent} is also NP-hard \cite{Karp1972}.

\begin{lemma}\label{pointdowny}
Let $A\in\{ 0,-\infty\}^{n\times d}$ and $\bm{y}\in\R^n$ be a finite form. Then $\bm{z}\in\R^d$ is a descent direction for the vector of zeros $\underline{0}\in\Rmax^d$, if and only if
\begin{equation}\label{dotpro}
\langle A\otimes \bm{z},\bm{y}\rangle>0.
\end{equation}
Moreover, in the case that $\sum_{i=1}^{n}y_{i}=0$, if $\underline{0}\in\Rmax^d$ has a descent direction $\bm{z}\in\R^d$ then is has a descent direction $\bm{z}'\in\{0,1\}^{d}$.
\end{lemma}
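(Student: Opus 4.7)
The plan is to unravel the max-plus product explicitly using the special structure that every finite entry of $A$ equals $0$, which reduces $R$ to an elementary quadratic in $\mu$ along the ray $\mu\bm{z}$, and then for the binary refinement to apply a layer-cake decomposition across thresholds together with the balance condition $\sum_i y_i=0$.

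For the first part, since $A,\bm{y}$ are a finite form with $A\in\{0,-\infty\}^{n\times d}$, each row of $A$ contains at least one zero, so $(A\otimes\underline{0})_i=\max_j a_{ij}=0$ for all $i$, giving $R(\underline{0})=\|\bm{y}\|_2^2/2$. For $\mu>0$ and $\bm{z}\in\R^d$, positive homogeneity of the max yields
\[
(A\otimes(\mu\bm{z}))_i=\max_{j\,:\,a_{ij}=0}\mu z_j=\mu\,(A\otimes\bm{z})_i.
\]
Substituting into the residual and expanding gives
\[
R(\mu\bm{z})-R(\underline{0})=-\mu\,\langle A\otimes\bm{z},\bm{y}\rangle+\tfrac{\mu^2}{2}\|A\otimes\bm{z}\|_2^2,
\]
so for all sufficiently small $\mu>0$ the sign is controlled by the linear coefficient, and \eqref{desvent} holds on some interval $(0,\epsilon]$ if and only if $\langle A\otimes\bm{z},\bm{y}\rangle>0$.

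For the second part, write $v_i=(A\otimes\bm{z})_i=\max_{j\,:\,a_{ij}=0}z_j$ and decompose each $v_i$ via the layer-cake identity
\[
v_i=\int_{-\infty}^{\infty}\bigl(\mathbf{1}[t<v_i]-\mathbf{1}[t<0]\bigr)\,dt.
\]
Multiplying by $y_i$, summing over $i$, and using $\sum_i y_i=0$ to annihilate the constant term, we obtain
\[
\langle A\otimes\bm{z},\bm{y}\rangle=\int_{-\infty}^{\infty}\Bigl(\sum_{i\,:\,v_i>t}y_i\Bigr)\,dt.
\]
Since the left-hand side is positive by assumption, some threshold $t^{\ast}\in\R$ must satisfy $\sum_{i\,:\,v_i>t^{\ast}}y_i>0$. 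Setting $z'_j:=\mathbf{1}[z_j>t^{\ast}]\in\{0,1\}$ yields $(A\otimes\bm{z}')_i=\mathbf{1}[v_i>t^{\ast}]$, again by the $\{0,-\infty\}$ structure, so $\langle A\otimes\bm{z}',\bm{y}\rangle=\sum_{i\,:\,v_i>t^{\ast}}y_i>0$, and the first part certifies that $\bm{z}'$ is a descent direction.

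The only delicate step is the max-plus homogeneity $A\otimes(\mu\bm{z})=\mu(A\otimes\bm{z})$: this relies on all finite entries of $A$ being the same value so that $\mu$ factors through the max, and would fail without the $\{0,-\infty\}$ assumption. The rest is a quadratic expansion and a standard layer-cake argument, with $\sum_i y_i=0$ entering only to kill the constant-threshold contribution.
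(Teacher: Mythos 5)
Your proof is correct. The first part is essentially the paper's argument: both use the homogeneity $A\otimes(\mu\bm{z})=\mu(A\otimes\bm{z})$, which holds because every finite entry of $A$ is $0$, and then read off the sign of $R(\mu\bm{z})-R(\underline{0})$ from the linear term; your explicit quadratic expansion also transparently handles the boundary case $\langle A\otimes\bm{z},\bm{y}\rangle=0$, where the nonnegative quadratic term rules out strict descent. For the second part you and the paper share the same core idea --- decompose $\bm{z}$ into a positive combination of $\{0,1\}$-valued threshold vectors, observe that thresholding commutes with $A\otimes(\cdot)$ thanks to the $\{0,-\infty\}$ structure, and conclude that positivity of $\langle A\otimes\bm{z},\bm{y}\rangle$ forces positivity for at least one threshold --- but your execution is genuinely different in form. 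The paper does this discretely: it normalizes $\bm{z}$ to $[0,1]$ (using $\sum_i y_i=0$ to justify replacing $\bm{z}$ by $a\bm{z}+b\underline{1}$), sorts the entries, builds the level vectors $\bm{h}(k)$ explicitly, and verifies $A\otimes\bm{h}(k)=A_P(\bm{h}(k))$ through the pattern and local-map machinery before summing. Your continuous layer-cake identity $v_i=\int(\mathbf{1}[t<v_i]-\mathbf{1}[t<0])\,dt$ packages all of that into one integral, with $\sum_i y_i=0$ entering only to make the integrand compactly supported, and the key identity $(A\otimes\bm{z}')_i=\mathbf{1}[v_i>t^{\ast}]$ is verified by a one-line direct computation rather than via patterns. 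The integral version is shorter and avoids the normalization and sorting bookkeeping; the paper's discrete version has the minor advantage of exhibiting the finite set of candidate binary directions explicitly (one per distinct entry value of $\bm{z}$), which connects more directly to the combinatorial pattern framework used elsewhere in the paper.
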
 
\begin{proof}
Note that since $A\in\{0,-\infty\}^{n\times d}$, we have $A\otimes (\underline{0}+\mu\bm{z})=\mu(A\otimes \bm{z})$, for all $\mu\geq 0$. From \eqref{desvent} we have that $\bm{z}\in\R^d$ is a descent direction for $\underline{0}$ if and only if there exists $\epsilon>0$, such that
$$
R(\underline{0}+\mu \bm{z})=\|\mu(A\otimes \bm{z})-\bm{y}\|_{2}^{2}/2<R(\underline{0})=\|\bm{y}\|_{2}^{2}/2
$$
for all $0<\mu\leq \epsilon$. Which is equivalent to 
$$
\lim_{\mu\rightarrow 0_{+}}\frac{\delta \big(\|\mu(A\otimes \bm{z})-\bm{y}\|_{2}^{2}/2\big)}{\delta \mu}=-\langle A\otimes \bm{z},\bm{y}\rangle<0.
$$

For the second part suppose that $\sum_{i=1}^{n}y_{i}=0$ and that $\bm{z}\in\R^d$ is a descent direction for $\underline{0}$. Note that for any $a,b\in\R$ with $a>0$, we have that $a\bm{z}+b\underline{1}$ is also a descent direction, where $\underline{1}\in\R^d$ is a vector of ones. Furthermore since $\sum_{i=1}^{n}y_{i}=0$, we must have that $\min_{i=1}^d z_{i}<\max_{i=1}^d z_{i}$. So we can assume without loss of generality that $\min_{i=1}^d z_{i}=0$ and $\max_{i=1}^d z_{i}=1$. Now let $\sigma$ be a permutation of $\{1,\dots,d\}$ such that
$$
z_{\sigma(1)}\leq z_{\sigma(2)}\leq \cdots \leq z_{\sigma(d)}.
$$
And define the sequence $b_{1}<b_{2}<\cdots< b_{m}$ such that $b_{1}=1$, $b_{m}=d$ and
$$
z_{\sigma(i)}=z_{\sigma(j)} ~ \Leftrightarrow ~b_{k}\leq i,j <b_{k+1}~\hbox{for some $k$}.
$$
Define the vectors $\big(\bm{h}(k)\in\R^d\big)_{k=1}^{m}$, by
$$
h(k)_{\sigma(j)}=\left\{\begin{array}{cc} 1 & \hbox{if $j\geq b_{k}$} \\ 0 & \hbox{otherwise.}\end{array}\right.
$$
for $j=1,\dots,d$. Note that for $r,j=1,\dots,d$ and $k=1,\dots,m$, we have
\begin{equation}\label{onetwo}
z_{r}>z_{j}~\Rightarrow h(k)_{r}\geq h(k)_{j},  \quad \hbox{and} \quad z_{r}=z_{j}~\Rightarrow h(k)_{r}= h(k)_{j}.
\end{equation}
Let $P=\pattern(\bm{z})$ and let $Q(k)=\pattern\big(\bm{h}(k)\big)$ for $k=1,\dots,m$. Now suppose that $j\in P_{i}$, then 
$$
a_{ir}=-\infty ~\forall ~r~:~ z_{r}>z_{j},  \quad \hbox{and} \quad \exists~s~:~z_{s}=z_{j},~a_{is}=0.
$$
So from \eqref{onetwo} we have that
$$
a_{ir}=-\infty ~\forall ~r~:~ h(k)_{r}>h(k)_{j},  \quad \hbox{and} \quad \exists~s~:~h(k)_{s}=h(k)_{j},~a_{is}=0,
$$
and that $j\in Q(k)_{i}$, for all $k=1,\dots,m$. Therefore $Q(k)\preceq P$ and $\bm{h}(k)\in \Cl\big(X(P)\big)$, so that
\begin{equation}\label{eeeee}
A\otimes \bm{h}(k)=A_{P}\big(\bm{h}(k)\big),
\end{equation}
for all $k=1,\dots,m$. The the local map in \eqref{eeeee} is given by 
$$
A_{P}(\bm{x})=L\bm{x}+\bm{y}_{P}, 
$$
where the vector $\bm{y}_{P}$ is defined by $(\bm{y}_{P})_{i}=a_{ip(i)}$, where $p$ is the subpattern of $P$. Note that since $A,y$ are a finite form and $P$ is a feasible pattern and since $A$ has only zero and minus infinity entries, we must have $\bm{y}_{P}=\underline{0}\in\R^n$ and therefore that $A_{P}$ is a classically linear map. 

Finally note that 
\begin{equation}\label{toto}
\bm{z}=\sum_{k=2}^{m}\bm{h}(k)\alpha_{k},
\end{equation}
where the coefficients
$$
\alpha_{k}=\left(z_{\sigma\big(b(k)\big)}-z_{\sigma\big(b(k-1)\big)}\right),
$$
are strictly positive for $k=2,\dots,m.$ From \eqref{eeeee}, \eqref{toto} and the fact that $A_{P}$ is linear, we obtain
\begin{equation}\label{meme}
 \langle A\otimes \bm{z},\bm{y}\rangle=\sum_{k=2}^{m}\alpha_{k}\langle A\otimes \bm{h}(k),\bm{y}\rangle.
\end{equation}
Therefore, using the result of the first part of the Lemma, if $z\in\R^d$ is a descent direction  for $\underline{0}\in\R^d$ then $\langle A\otimes \bm{z},\bm{y}\rangle>0$ and from \eqref{meme} there exists $2\leq k\leq m$ such that  $\langle A\otimes \bm{h}(k),\bm{y}\rangle>0$, equivalently, such that $\bm{h}(k)$ is a descent direction for $\underline{0}\in\R^d$ with all entries in $\{0,1\}$.

\end{proof}

\begin{theorem}\label{setsetset}
Let $F=\{F_{i}~:~i=1,\dots,m\}$ be a family of subsets of $\{1,\dots,n\}$ with $\cup_{i=1}^{m} F_{i}=\{1,\dots,n\}$ and let $1< k< m$. There exists a subset $J \subset\{1,\dots,m\}$, such that $|J|\leq k$ and $\cup_{j\in J} F_{j}=\{1,\dots,n\}$, if and only if $\underline{0}\in\Rmax^m$ has a descent direction for the following $N\times m$ max-plus $2$ norm regression problem, where $N=n+m+\frac{m(m-1)}{2}+1.$ Set
$$
A=\left[\begin{array}{c} A_{1} \\ A_{2} \\ A_{3} \\ A_{4} \end{array}\right], \quad \bm{y}=\left[\begin{array}{c} \bm{y}_{1} \\ \bm{y}_{2} \\ \bm{y}_{3} \\ \bm{y}_{4} \end{array}\right],
$$
where 
\begin{enumerate}
\item $A_{1}\in\Rmax^{n\times m}$ is defined by $(A_{1})_{ij}=0$, if $i\in F_{j}$ and $(A_{1})_{ij}=-\infty$, otherwise, for $i=1,\dots,n$, $j=1,\dots,m$. $\bm{y}_{1}\in\Rmax^n$ is defined by $(\bm{y}_{1})_{i}=a$ for all $i=1,\dots,n$. 

\item $A_{2}\in\Rmax^{m\times m}$ is the $m\times m$ max-plus identity matrix defined by $(A_{2})_{ij}=0$, if $i=j$ and $(A_{2})_{ij}=-\infty$, otherwise, for $i,j=1,\dots,m$. $\bm{y}_{2}\in\Rmax^m$ is defined by $(\bm{y}_{2})_{i}=b$ for all $i=1,\dots,m$.

\item $A_{3}\in\Rmax^{{m(m-1)}/{2}\times m}$ is defined by $(A_{3})_{ij}=0$, for $j\in p(i)$ and $(A_{3})_{ij}=-\infty$, otherwise, for $i=1,\dots,{m(m-1)}/{2}$, $j=1,\dots,m$, where $p(1),p(2),\dots,p({m(m-1}/{2})$ is a list of all the unordered pairs of elements of $\{1,\dots,m\}$. $\bm{y}_{3}\in\Rmax^{{m(m-1)}/{2}}$ is defined by $(\bm{y}_{3})_{i}=c$, for all $i=1,\dots,{m(m-1)}/{2}$.

\item $A_{4}\in\Rmax^{1\times m}$ is defined by $(A_{4})_{1j}=0$ for $j=1,\dots,m$. $\bm{y}_{4}\in\Rmax$ is given by 
$$
\bm{y}_{4}=-na-mb-\frac{m(m-1)}{2}c.
$$ 
\end{enumerate}
Where the coefficients are given by
$$
a=m(k+1), \quad b=m-k-3/2, \quad c=-2.
$$
\end{theorem}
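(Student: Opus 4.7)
The plan is to verify the stated equivalence by reducing to the form of Lemma~\ref{pointdowny}. First I would check the lemma's two hypotheses. The entries of $A$ lie in $\{0,-\infty\}$ by construction, and every row of $A$ contains a zero --- the covering condition $\bigcup_{j=1}^{m}F_j=\{1,\dots,n\}$ handles $A_1$, while $A_2$, $A_3$ and $A_4$ manifestly have zero entries in every row. Thus $A,\bm{y}$ form a finite form. Summing the block contributions, $\sum_{i=1}^N y_i = na+mb+\binom{m}{2}c+\bm{y}_4 = 0$ by the definition of $\bm{y}_4$. Hence Lemma~\ref{pointdowny} reduces the existence of a descent direction at $\underline{0}$ to the existence of $\bm{z}\in\{0,1\}^m$ with $\langle A\otimes\bm{z},\bm{y}\rangle > 0$, i.e.\ to a subset $J\subseteq\{1,\dots,m\}$ with $\langle A\otimes \mathbf{1}_J,\bm{y}\rangle>0$. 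Note that both $J=\emptyset$ and $J=\{1,\dots,m\}$ give $\langle A\otimes \bm{z},\bm{y}\rangle=\sum_i y_i = 0$, so the search reduces to proper nonempty $J$.

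For such $J$, I would compute $A\otimes \mathbf{1}_J$ blockwise: the $i$th entry is $1$ (otherwise $0$) iff $i\in\bigcup_{j\in J}F_j$ in the $A_1$ block, iff $i\in J$ in the $A_2$ block, iff $p(i)\cap J\neq\emptyset$ in the $A_3$ block, and is always $1$ in the $A_4$ block. Setting $u=|\bigcup_{j\in J}F_j|$ and $v=|J|$, the number of unordered pairs of $\{1,\dots,m\}$ meeting $J$ equals $\binom{m}{2}-\binom{m-v}{2}$. Taking the inner product with $\bm{y}$, cancelling the baseline $na+mb+\binom{m}{2}c$ against $\bm{y}_4$, and substituting the given $a,b,c$, a direct calculation yields
\begin{equation*}
\langle A\otimes\mathbf{1}_J,\bm{y}\rangle \;=\; (u-n)\,m(k+1) \;+\; (m-v)\!\left(k-v+\tfrac{1}{2}\right).
\end{equation*}

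The conclusion then follows by a case split on whether $J$ covers $\{1,\dots,n\}$. If $J$ is a cover then $u=n$ kills the first term, and the second $(m-v)(k-v+\tfrac12)$ is strictly positive iff $v\leq k$, using $1\leq v\leq m-1$. If $J$ is not a cover then $u-n\leq -1$, so the first term is at most $-m(k+1)$; the second, viewed as an upward parabola in $v$, is maximized on $\{1,\dots,m-1\}$ at $v=1$ with value $(m-1)(k-\tfrac12)$. The total is therefore at most $-\tfrac{3m}{2}-k+\tfrac12 < 0$ since $m\geq 3$ and $k\geq 2$. Thus a descent direction exists at $\underline{0}$ iff a cover of size at most $k$ exists, and the NP-hardness conclusion follows from \cite{Karp1972}. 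The crux of the construction is the simultaneous calibration of $a,b,c$: the coefficient $a$ must be large enough to rule out non-covers, while $b$ and $c$ must jointly shift the positivity threshold on $v$ to land exactly between $k$ and $k+1$; once the displayed identity is in hand, the case analysis is a routine arithmetic check.
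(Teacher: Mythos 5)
Your proposal is correct and follows essentially the same route as the paper: verify the finite-form and $\sum_i y_i=0$ hypotheses, invoke Lemma~\ref{pointdowny} to reduce to $\{0,1\}$-valued directions, compute $\langle A\otimes\mathbf{1}_J,\bm{y}\rangle$ blockwise, and arrive at the same factored identity (your $(u-n)m(k+1)+(m-v)(k-v+\tfrac12)$ is the paper's $(x-m)\big(x-(k+\tfrac12)\big)+a(u-n)$), followed by the same cover/non-cover case split. Your write-up is if anything slightly more explicit than the paper's in justifying the sign analysis and the degenerate cases $J=\emptyset$, $J=\{1,\dots,m\}$.
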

\begin{proof} First note that 
$$
\sum_{i=1}^{N}y_{i}=0,
$$
and that $A,\bm{y}$ are a finite form so the results of Lemma~\ref{pointdowny} apply. Now suppose that $\bm{z}\in\{0,1\}^m$ and let $J=\{j~:~z_{j}=1\}$, $x=|J|$ and $u=|\cup_{j\in J} F_{j}|$. Then we have
\begin{align*}
\langle A\otimes \bm{z},\bm{y}\rangle &=ua+xb+\big(x(m-x)+\frac{x(x-1)}{2}\big)c-na-mb-\frac{m(m-1)}{2}c \\ &=(x-m)\big(x-(k+1/2)\big)+a(u-n),
\end{align*}
unless $x=0$, in which case we have $\langle A\otimes \bm{z},\bm{y}\rangle=0$. Therefore since $0\leq x\leq m$ and $0\leq u\leq n$, we have
\begin{equation}\label{ifff}
\langle A\otimes \bm{z},\bm{y}\rangle>0 ~ \Leftrightarrow ~ u=n, ~x\leq k.
\end{equation}

\smallskip

Now suppose that the set covering problem has a solution $J\subset\{1,\dots,m\}$. Then $|J|\leq k$ and $|\cup_{j\in J} F_{j}|=n$. Define $\bm{z}\in\{0,1\}^{m}$, by $z_{j}=1$, if and only if $j\in J$, then from \eqref{ifff} we have $\langle A\otimes \bm{z},\bm{y}\rangle>0$, which from the first part of Lemma~\ref{pointdowny} is equivalent to $\bm{z}$ being a descent direction for $\underline{0}$. Conversely suppose that $\bm{z}\in\R^m$ is a descent direction for $\underline{0}$, then from the second part of Lemma~\ref{pointdowny} we have that there exists a descent direction $\bm{z}'\in\{0,1\}^{m}$ and from the first part of Lemma~\ref{pointdowny} we  have that $\langle A\otimes \bm{z}',\bm{y}\rangle>0$. Now let $J=\{j~:~z_{j}'=1\}$, then from \eqref{ifff} we must have that $|J|\leq k$ and $|\cup_{j\in J} F_{j}|=n$, i.e. that $J$ is a solution to the set covering problem.

\end{proof}

\begin{corollary}
Problem~\ref{descent} is NP-hard.
\end{corollary}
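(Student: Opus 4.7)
The plan is to derive the corollary as an immediate consequence of Theorem~\ref{setsetset}, which was constructed precisely for this purpose. The only real content is to verify that the reduction exhibited there is a polynomial-time many-one reduction from the set covering problem (Problem~\ref{setcovering}) to the descent-direction problem (Problem~\ref{descent}), and then invoke the NP-hardness of set covering from Karp~\cite{Karp1972}.

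First I would point out that, given an arbitrary instance $(F,k)$ of Problem~\ref{setcovering} with $|F|=m$ and ground set $\{1,\dots,n\}$, Theorem~\ref{setsetset} explicitly constructs a max-plus $2$-norm regression instance consisting of $A\in\Rmax^{N\times m}$ and $\bm{y}\in\Rmax^N$, where $N=n+m+\tfrac{m(m-1)}{2}+1$, together with the distinguished point $\underline{0}\in\Rmax^m$. Both $N$ and the number of entries of $A,\bm{y}$ are polynomial in the size of the original instance, and each entry is either $-\infty$, $0$, or one of the three constants $a,b,c,\bm{y}_4$, each of which is computable in polynomial time. Hence the map from $(F,k)$ to the triple $(A,\bm{y},\underline{0})$ is a polynomial-time reduction.

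Next I would invoke the equivalence already proved in Theorem~\ref{setsetset}: the set covering instance $(F,k)$ admits a subfamily $J\subseteq\{1,\dots,m\}$ with $|J|\leq k$ and $\cup_{j\in J}F_{j}=\{1,\dots,n\}$ if and only if $\underline{0}$ has a descent direction for the constructed regression instance, i.e.\ the triple $(A,\bm{y},\underline{0})$ is a YES instance of Problem~\ref{descent}. This is exactly the defining property of a polynomial-time many-one reduction from Problem~\ref{setcovering} to Problem~\ref{descent}.

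Finally, since Problem~\ref{setcovering} is NP-hard \cite{Karp1972}, the existence of this reduction immediately yields the NP-hardness of Problem~\ref{descent}, completing the proof. There is no real obstacle here, all the nontrivial work, namely designing the block matrix $A$ with its four blocks and choosing the constants $a=m(k+1)$, $b=m-k-3/2$, $c=-2$ so that the inner product $\langle A\otimes\bm{z},\bm{y}\rangle$ is positive exactly on cover-inducing $\{0,1\}$-vectors, has already been absorbed into Lemma~\ref{pointdowny} and Theorem~\ref{setsetset}.
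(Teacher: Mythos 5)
Your proposal is correct and follows essentially the same route as the paper: both cite Theorem~\ref{setsetset} as a polynomial-size (and polynomial-time) reduction from set covering to Problem~\ref{descent} and then invoke the NP-hardness of set covering from \cite{Karp1972}. Your version merely spells out the polynomiality of the construction in slightly more detail than the paper does.
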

\begin{proof}
Theorem~\ref{setsetset} shows that any instance of the set-covering problem can be reduced to an instance of Problem~\ref{descent} with size polynomial in that of the original problem. From \cite{Karp1972} we have that the set-covering problem is NP-hard and therefore Problem~\ref{descent} is also NP-hard. 
\end{proof}

\subsection{Brute force method}

An exhaustive approach to solving the max-plus $2$-norm regression problem exactly is to search through all of the feasible patterns, computing the normal projections and checking their admissibility for each one in turn. We  then select the closest admissible normal projection for our solution.

To search efficiently through the set of all feasible patterns we consider the tree $T$, with vertices $V_{0},\dots, V_{n}$ at depths $0,1,\dots,n$ respectively. The depth $k$ vertices $V_{k}$ are ordered $k$-tuples of the form $v=(P_{1},\dots,P_{k})$, with $P_{i}\subset\{1,\dots,d\}$, for $i=1,\dots,k$.  The depth $k$ vertices can be interpreted as patterns of support for the max-plus regression problem formed from the first $k$ rows of $A$ and $\bm{y}$.  A vertex $v\in V_{k}$ is parent to  $v'\in V_{k+1}$ if and only if $v'=(v,P_{k+1})$ for some $P_{k+1}\subset\{1,\dots,d\}$. 

In analogy to the feasibility matrix for a pattern \eqref{feasibilitymatrixdef}, define the feasibility matrix for a vertex $v\in V_{k}$, by $F_{v}\in\Rmax^{d\times d}$, with 
\begin{equation}
f_{jk}=\left\{\begin{array}{cc} 0, & \hbox{for $j=k$,} \\ \max\big\{-\infty,\max \{a_{ik}-a_{ij}~:~ j\in P_{i}, ~ i=1,\dots,k\}\big\},   & \hbox{otherwise.}\end{array}\right.
\end{equation}
We say that $v\in V_{k}$ is feasible if $\lambda(F_{v}^{\star})=0$. Note that the set of feasible leaf vertices is identical to the set of feasible patterns of support. It is easy to show that if $v\in V_{k}$ is feasible then all of its ancestors are feasible and at least one of its children is feasible. Also if $v\in V_{k}$ is not feasible then none of its children are feasible.

Next we define an order $\trianglelefteq_{L}$ on the vertices of $T$ by taking an arbitrary ordering $\trianglelefteq$ on the subsets of $\{1,\dots,d\}$ and extending this order lexicographically to $T$. We start at the vertex $v_{0}=()$ and proceed to search through $T$, in order of $\trianglelefteq_{L}$. At each vertex $v\in V_{k}$, we check for feasibility by computing $\lambda(F_{v}^{\star})$, with worst case cost $\Theta(d^3)$. If $\lambda(F_{v}^{\star})>0$, then $v$ is non-feasible and we skip all of its decedents. Whenever we reach a feasible leaf vertex $P\in V_{n}$, we compute $\Phi(P,\bm{y})$ and $\Psi(P,\bm{y},\underline{-\infty})$ with cost $\Theta(n)$. We check for admissibility by computing $F_{P}\otimes \Psi(P,\bm{y},\underline{-\infty})$ with cost $\Theta(d^2)$. If $F_{P}^{}\otimes \Psi(P,\bm{y},\underline{-\infty})=\Psi(P,\bm{y},\underline{-\infty})$ then we compute $\|\bm{y}-\Phi(P,\bm{y})\|_{2}$ and if this residual is the best that we have seen so far, then we save $\Psi(P,\bm{y},\underline{-\infty})$ as the interim optimal solution. The algorithm terminates when we have either checked or skipped all vertices in $T$. See Algorithm~\ref{brute}.

\begin{algorithm}
\caption{ \label{brute}
Returns an optimal solution to Problem~\ref{mppnreg} with $p=2$.}

\medskip

\begin{algorithmic}[1]

\State set $v=()$
\State set $r_{\min}=\infty$
\While{not all of $T$ explored or skipped}
\If{$\lambda(F_{v}^{\star})=0$ $\Leftrightarrow$ $v$ is feasible}
\If {$v=P$ is a leaf vertex}
\State compute $\Phi(P,\bm{y})$ and $\Psi(P,\bm{y},\underline{-\infty})$
\If {$F_{P}^{}\otimes \Psi(P,\bm{y},\underline{-\infty})=\Psi(P,\bm{y},\underline{-\infty})$  $\Leftrightarrow$ $\Phi(P,\bm{y})$ is admissible}
\If {$\|\Phi(P,\bm{y})-\bm{y}\|_{2}<r_{\min}$}
\State update $r_{\min}\mapsfrom \|\Phi(P,\bm{y})-\bm{y}\|_{2}$ and $x\bm{x}\mapsfrom \Psi(P,\bm{y},\underline{-\infty})$
\EndIf
\EndIf
\EndIf
\Else
\State  skip decedents of $v$
\EndIf
\State go to next $v$
\EndWhile
\State return solution $\bm{x}$

\end{algorithmic}

\end{algorithm}

Applied to an $n\times d$ problem, Algorithm~\ref{brute} must check the feasibility of $c_{v}$ vertices and compute normal projections and check admissibility for $c_{l}$ leaf vertices. From \eqref{numer} we have 
$$
c_{l}\leq\sum_{k=1}^{n} \frac{(n+d-k-1)!}{(n-k)!\cdot (d-k)! \cdot (k-1)!},
$$
with equality in the generic case. The total cost of Algorithm~\ref{brute} is $\Theta(c_{v}d^3+c_{l}d^2)$. We do not currently have any convenient expression for $c_{v}$ but the contribution to the cost from the leaf vertices alone is greater than polynomial. Although such exhaustive search algorithms will never be suitable for applying to very large problems, they can still be extremely valuable for use on smaller data sets and as a way to benchmark the performance of faster approximate algorithms.

%

\subsection{Newton's method}

The results of the previous section and Theorem~\ref{setsetset} suggest that computing exact local minima for the max-plus $p=2$ regression problem might not be computationally feasible for larger problems. Instead we propose using the following technique, which consists of Newton's method with an undershooting parameter.  

Recall that the squared residual $R:\Rmax^d\mapsto\mathbb{R}_{+}$, given by $R(\bm{x})=\|A\otimes\bm{x} -\bm{y}\|_{2}^{2}/2$ is picewise quadratic and that  for $\bm{x}\in\Cl\big(X(P)\big)$, we have $R(\bm{x})=R_{P}(\bm{x})$.
Newton's method minimizes a function by iteratively mapping to the minimum of a local quadratic approximation to that function. In the case of the squared residual $R$ this means iteratively mapping to the minimum of the locally quadratic piece. There are several options when implementing Newton's method, for example when $\bm{x}$ is contained in the closure of more than one domain, which pattern do we choose? Also, how do we choose between non-unique minima? The method we set out below is chosen primarily for its simplicity. 
\medskip

For $\bm{x}\in\Rmax^d$, define the \emph{subpattern} $p(\bm{x})$ to be the subpattern of $\pattern(\bm{x})$ 
\begin{equation}
p(\bm{x})_{i}=\min\big(\pattern(\bm{x})_{i}\big).
\end{equation}
Then $p(\bm{x})\preceq \pattern(\bm{x})$ and $\bm{x}\in\Cl\Big(X\big(p(\bm{x})\big)\Big)$.
Define the \emph{Newton update map} $\mathcal{N}:\Rmax^d\mapsto \Rmax^d$, by
\begin{equation}\label{newtonmappp}
\mathcal{N}(\bm{x})=\Psi\big(p(\bm{x}),\bm{y},\bm{x}\big).
\end{equation}
The map \eqref{newtonmappp} is set to always chooses a pattern whose domain is of the maximum possible dimension. In the case where the minima is non-unique, it returns the one that is closest to the current point.

A difficulty for Newton's method is that the non-differentiability of $R$ means that the iteration needn't converge to a local minima and can instead get caught in a periodic orbit. This makes choosing a stopping condition difficult. We use the rule that if the residual has not decreased in some fixed number of steps then we terminate the algorithm and return the best solution from the iterations orbit. We also include a shooting parameter $\mu\in(0,1)$ and make the update $\bm{x}\mapsfrom (1-\mu)\bm{x}+\mu \mathcal{N}(\bm{x})$. Choosing $\mu<1$ causes the method to undershoot and so avoid being caught in the periodic orbits mentioned previously. If Algorithm~\ref{newton} iterates $k$ times then it has cost $\Theta(knd)$. In the Numerical examples that follow we randomly sample ten different initial conditions then apply Algorithm~\ref{newton} once with $\mu=1$ then once more with $\mu=0.05$, each time using $t=5$, then pick the best approximate solution. Optimizing the choice of parameters and random starting conditions is an important topic for future research. 

\begin{algorithm}
\caption{ \label{newton} {\bf (Newton's method)} 
Given an initial guess $\bm{x}$, returns an approximate solution to Problem~\ref{mppnreg} with $p=2$. Parameters are $t\in\mathbb{N}$ the number of iterations for stopping condition and $\mu\in(0,1)$ the undershooting parameter, which may be allowed to vary during the computation.}

\medskip

\begin{algorithmic}[1]

\State set $r_{\min}=\infty$
\While{not terminated}
\State update $\bm{x}\mapsfrom (1-\mu)\bm{x}+\mu \mathcal{N}(\bm{x})$
\If {$R(\bm{x})<r_{\min}$} $r_{\min}=R(\bm{x})$, $\hat{\bm{x}}=\bm{x}$, \EndIf
\If {$r_{\min}$ not decreased for $t$ iterations} terminate
\EndIf
\EndWhile
\State return approximate solution $\hat{\bm{x}}$

\end{algorithmic}

\end{algorithm}

Applied to an $n\times d$ problem Algorithm~\ref{newton} has cost $\Theta(nd)$ per iteration.

\section{System identification}\label{tss}

Consider the $d$-dimensional stochastic max-plus linear dynamical system 
\begin{equation}\label{dysys}
\bm{x}(n+1)=M\otimes \bm{x}(n)+\zeta(n),
\end{equation}
where $M\in\Rmax^{d\times d}$ and $\zeta(0),\zeta(1),\dots\in\R^d$ are i.i.d Gaussians with mean zero and covariance matrix $\sigma^2 I$. Suppose that we do not know $M$, but that we have observed an orbit $\bm{x}(0),\bm{x}(1),\dots,\bm{x}(N)$ and want to estimate $M$ from this data. The maximum likelihood estimate for this inference problem is given by
\begin{equation}\label{invdsres}
\max_{A\in\Rmax^{d\times d}}\mathbb{P}\{\bm{x}(0),\bm{x}(1)\dots,\bm{x}(N)~|~ \bm{x}(k+1)=A\otimes \bm{x}(k)+\zeta(k), ~ k=0,1,\dots,N-1 \}.
\end{equation}
This problem can be expressed as $d$ independent regression problems as follows. Expanding \eqref{invdsres} yields
\begin{align}
\mathbb{P}\{\bm{x}(0),\dots,\bm{x}(N) ~|~ A\} &=\prod_{n=0}^{N-1}\mathbb{P}\{\zeta(n)=\bm{x}(n+1)-A\otimes \bm{x}(n)\} \\  &=\prod_{n=0}^{N-1}\prod_{k=1}^{d}\frac{1}{\sqrt{2\pi\sigma^2}}\exp \left({\frac{-\Big(\bm{x}(n+1)-A\otimes \bm{x}(n)\Big)_{k}^2}{2\sigma^2}}\right).
\end{align}
The negative log likelihood is therefore given by
\begin{equation}\label{llhood}
-\log\big(\mathbb{P}\{\bm{x}(0),\dots,\bm{x}(N) ~|~ A\}\big) =\frac{Nd}{2}\log(2\pi\sigma^2)+\frac{1}{2\sigma^2}\|A\otimes X(:,1:N)-X(:,2:N+1)\|_{F}^{2},
\end{equation}
where $X\in\Rmax^{d\times (N+1)}$ is the matrix whose columns are the time series observations $\bm{x}(0),\dots,\bm{x}(N)$ and where we use the Matlab style notation $X(\mathcal{I},\mathcal{J})$ to indicate the submatrix of formed from the intersection of the $\mathcal{I}$ rows and $\mathcal{J}$ columns of $X$ and use the symbol $:$ alone to denote the full range of row/cols. Next note that
\begin{equation}\label{ffffff}
\|A\otimes X(:,1:N)-X(:,2:N+1)\|_{F}^{2}=\sum_{k=1}^{d}\|A(k,:)\otimes X(:,1:N)-X(k,2:N+1)\|_2^2.
\end{equation}
Maximizing \eqref{invdsres} is therefore equivalent to minimizing each of the terms summed over in \eqref{ffffff}. The $k$th of these terms measures our model's ability to predict the value of the $k$th variable at the next time step. To minimize this error we choose the $k$th row of $A$ by
\begin{equation}\label{rowA}
A(k,:)=\arg\min_{\bm{x}\in\Rmax^{1\times d}}\|\bm{x}\otimes X(:,1:N) -X(k,2:N+1)\|_{2},
\end{equation}
which requires us to solve an $n\times d$ max-plus $2$-norm regression problem. We can therefore solve \eqref{invdsres} by solving $d$ such regression problems. 

\begin{example}\label{invpegg}
Consider the matrix
$$
M=\left[\begin{array}{cccc} 7 &  15 &  10 &  -\infty   \\  14 &  -\infty &  11 &  11  \\  14 &  -\infty &  -\infty &  -\infty   \\  15 &  8 &  7 &  9  \\  \end{array}\right].
$$
From the initial condition $\bm{x}(0)=[0,0,0,0]^{\top}$ we generate two orbits of length $n=200$ by iterating \eqref{dysys}. One with a low noise level, $\sigma=1$ and one with a  high noise level, $\sigma=5$. Next we compute the maximum likelihood estimate for $M$ from the time series, by applying Algorithm~\ref{newton} to each of the row problems \eqref{rowA}, for $k=1,\dots,d$. Our estimates are given by
$$ 
  A(\sigma=1)=\left[\begin{array}{cccc} 2.65 &  14.9 &  10.6 &  10  \\  13.8 &  -30.4 &  -53.2 &  10.9  \\  14 &  7.06 &  8.5 &  5.7  \\  15 &  9.4 &  -50.3 &  8.28  \\  \end{array}\right], \quad A(\sigma=5)=\left[\begin{array}{cccc} 8.24 &  14.1 &  11 &  1.67  \\  13.8 &  -\infty &  9.28 &  11.1  \\  13.8 &  -\infty &  -\infty &  -\infty   \\  14.3 &  9.21 &  7.49 &  7.62  \\  \end{array}\right].
 $$
 Table~\ref{resftab} displays the Frobenius error term \eqref{ffffff} for each of these estimates. Note that both of these estimates fit the data better than the true system matrix $M$, which indicates that Algorithm~\ref{newton} is able to find close to optimal solutions to the regression problem. 
 
Comparing our estimates to $M$, we see that in both cases we have inferred values that are roughly correct for the larger entries in the matrix but that the minus infinities are poorly approximated in both cases and that some of the smaller finite entires are poorly approximated in the low noise case. For each orbit we record the matrix $S\in\mathbb{N}^{4\times 4}$, with
$$
s_{ij}=|\{0\leq n<N-1 ~:~\big(A\otimes x(n)\big)_{i}=a_{ij}+x(n)_{j}\}|,
$$
which records how often variable $j$ attains the maximum in determining variable $i$ at the next time step, for $i,j=1,\dots,4$. Therefore $s_{ij}$ can be  thought of as a measure of how much evidence we have to infer the parameter $a_{ij}$ from the orbit. These matrices are given by
$$
S(\sigma=1)=\left[\begin{array}{cccc} 0 &  201 &  0 &  0  \\  167 &  0 &  5 &  29  \\  201 &  0 &  0 &  0  \\  197 &  0 &  0 &  4  \\  \end{array}\right], \quad S(\sigma=5)=\left[\begin{array}{cccc} 30 &  137 &  34 &  0  \\  108 &  0 &  41 &  52  \\  201 &  0 &  0 &  0  \\  133 &  30 &  12 &  26  \\  \end{array}\right].
$$
Comparing the results it is clear that our inferences are more accurate for entries with more evidence. In the low noise case the evidence is all contained on a small number of entries, as under the nearly deterministic behavior of this regime only a few positions are ever able to attain the maximum. In the high noise case the more random behavior means that more entries are able to attain the maximum and therefore the evidence is more uniformly distributed, except onto the minus infinity entries, which can never attain the maximum.

\end{example}

Inferring the values of entries that do not play a role in the dynamics or only play a very small role is therefore an ill posed problem and consequently we obtain MLE matrices $A$ that do a good job of fitting the data but which are not close to the true system matrix $M$. There are two common strategies for coping with such ill posed inverse problems. The first is to choose a prior distribution for the inferred parameters, then compute a maximum a posteri estimate which minimizes the likelihood times the prior probability. The second approach is to add a regularization penalty to the targeted residual. Regularization is typically used to improved the well-posedness of inverse problems and to promote solutions which are in some way simpler. Typical choices for conventional linear regression problems are the $1$-norm or $2$-norm of the solution. For max-plus linear $2$-norm regression we propose the following regularization penalty, which is chosen to promote solutions $\bm{x}\in\Rmax^d$ with smaller entries and with more entries equal to $-\infty$.

\begin{problem}\label{regl2}
For $A\in\mathbb{R}_{\max}^{n\times d}$, $\bm{y}\in\mathbb{R}_{\max}^{n}$ and $\lambda\geq 0$, we seek
\begin{equation}\label{qqq1}
\min_{\bm{x}\in\mathbb{R}_{\max}^{d}}\Big(\|\big(A\otimes \bm{x}\big)-\bm{y}\|_{2}^2+\lambda\sum_{j=1}^{d}\bm{x}_{j}\Big),
\end{equation}
where we use the convention that in the case when the residual is $+\infty$ and the regularization term is $-\infty$ the total is $+\infty$. When solutions with $-\infty$ totals are possible we order them first by the number of components in $\bm{x}$ equal to $-\infty$ and then by the total in \eqref{qqq1} not including those $-\infty$ terms.  
\end{problem}

Rather than searching for a global optima for Problem~\ref{regl2}, which would contain the largest number of $-\infty$ entries possible, we instead start from a good (ideally optimal) solution to the unregularized problem and search for a nearby solution to Problem~\ref{regl2}. In this way the regularization term provides a kind of downwards pull on the entries with very little evidence but not enough to disrupt the entries for which there is plenty of evidence. 

We solve Problem~\ref{regl2} by solving a sequence of max-plus $2$-norm regression problems by an approach which is inspired by the iteratively reweighed least squares method for solving conventional $1$-norm regularized $2$-norm regression problems \cite{IRLS}.  Let $I\in\Rmax^{d\times d}$ be the max-plus identity matrix with zeros on the diagonal and minus infinities off of the diagonal and consider the residual
\begin{align}
\label{ww1} \left\|\left[\begin{array}{c} A \\ I \end{array}\right]\otimes \bm{x}-\left[\begin{array}{c} \bm{y} \\ \bm{x}'-\lambda/2 \end{array}\right]\right\|_{2}^2&=\|A\otimes \bm{x}-\bm{y}\|_{2}^{2}+\sum_{j=1}^{d}(\lambda/2+\bm{x}-\bm{x}')^2 \\ &=\|A\otimes \bm{x}-\bm{y}\|_{2}^{2}+\lambda\sum_{j=1}^{d}(\bm{x}-\bm{x}')_{j}+\uc{O}\big((\bm{x}-\bm{x}')^2\big).
\end{align}
Therefore, for $\bm{x}$ close to $\bm{x}'\in\R^d$, \eqref{ww1} only differs from \eqref{qqq1} by a constant factor. Algorithm~\ref{irsls} computes a sequence of approximate solutions, each time shifting the  additional target variables in \eqref{ww1} to match the gradient of the residual in \eqref{qqq1}. If a component of the solution appears to be diverging to minus infinity, then we set it to equal this limit.

\begin{algorithm}
\caption{ \label{irsls} {\bf (Iteratively reshifted least squares)} 
Given an initial guess $\bm{x}$, returns an approximate solution to Problem~\ref{regl2}.}

\medskip

\begin{algorithmic}[1]

\While{not converged}
\State apply a max-plus 2-norm regression solver to compute
 $$\bm{x}\mapsfrom \arg\min_{\bm{x}'\in\Rmax^d}\left\|\left[\begin{array}{c} A \\ I \end{array}\right]\otimes \bm{x}'-\left[\begin{array}{c} y \\ \bm{x}(k-1)-\lambda/2 \end{array}\right]\right\|_{2}^2$$
\EndWhile
\State return approximate solution $\bm{x}$

\end{algorithmic}

\end{algorithm}

\begin{example}
Returning to the problem of Example~\ref{invpegg}. We repeat our analysis of the time-series data only this time we include a regularization term when computing each row via \eqref{rowA}. The results of our regularized inference are as follows 
$$
 {A}(\sigma=1,\lambda=10)=\left[\begin{array}{cccc} -\infty &  15 &  -\infty &  -\infty   \\  13.9 &  -\infty &  -\infty &  11  \\  14 &  -\infty &  -\infty &  -\infty   \\  14.9 &  -\infty &  -\infty &  -\infty   \\  \end{array}\right], \quad {A}(\sigma=5,\lambda=10)=\left[\begin{array}{cccc} 8.05 &  14 &  10.5 &  -\infty   \\  13.7 &  -\infty &  9.03 &  10.9  \\  13.8 &  -\infty &  -\infty &  -\infty   \\  14.1 &  8.97 &  7.26 &  7.4  \\  \end{array}\right].
 $$
\end{example}
Note that any entry with little of no evidence is set to minus infinity and that the remaining entries are all fairly accurate approximations of the entries in the true system matrix $M$. Table~\ref{resftab} shows that applying the regularization penalty with $\lambda=10$ only results in a tiny degradation in the solutions fit to the data.

\begin{table}
\centering
\caption{For the inverse problem of Example~\ref{invpegg}. Squared Frobenius norm residual $\|A\otimes X(:,1:n)-X(:,2:n+1)\|_{F}^2$, for low and high noise orbits, with and without regularization penalty. All numeric values given to two decimal places.}\label{resftab}
\begin{tabular}{c|c|c|c}
 & $M$ & $A(\sigma,\lambda=0)$ &  $A(\sigma,\lambda=10)$ \\
\hline
$ \sigma=1$ & 233.78 &  227.41 & 251.86 \\
\hline
$ \sigma=5$ & 5308.58 & 5267.86 & 5275.12
\end{tabular}
\end{table}

\section{Discussion}

In this paper we presented theory and algorithms for max-plus $2$-norm regression and then demonstrated how it could be applied to system identification of max-plus linear systems with Gaussian noise. 


We have shown how the geometry of max-plus linear spaces give rise to non-convex optimization problems. We have also proven that the exponentially many different patterns of support in a max-plus regression problem means that even the simple problem of determining whether a point is a local minimum is NP-hard. 

In spite of these difficulties we did find that Algorithm~\ref{newton} worked well enough in our example problem. However, developing efficient algorithms that are able to provide some better performance guarantees would be very desirable. Theorem~\ref{setsetset} seems to stand somewhat in the way of this goal, but note that the theorem relates to a specific point for a highly structured (i.e. degenerate) problem. So it may still be possible to developing an efficient residual descending algorithm.

As noted in the introduction, most applications of max-plus linear dynamical systems use petri-net models, which result in highly structured iteration matrices and noise processes. Further work is needed to adapt the approach used in Section~\ref{tss} to this setting. Similarly to include a control input as in \cite{1184996}. Framing these more general inverse problems explicitly in terms of linear regression problems might inspire new techniques, possibly by trying to develop further max-plus analogues of classical linear systems theory.  
%

\section*{Acknowledgement} We thank Henning Makholm for answering a question on Math Stack Exchange, which helped in the formulation of Theorem~\ref{setsetset}.

\bibliographystyle{is-abbrv}
\bibliography{trop}

\begin{thebibliography}{10}
\ifx \showCODEN  \undefined \def \showCODEN #1{CODEN #1}  \fi
\ifx \showISBN   \undefined \def \showISBN  #1{ISBN #1}   \fi
\ifx \showISSN   \undefined \def \showISSN  #1{ISSN #1}   \fi
\ifx \showLCCN   \undefined \def \showLCCN  #1{LCCN #1}   \fi
\ifx \showPRICE  \undefined \def \showPRICE #1{#1}        \fi
\ifx \showURL    \undefined \def \showURL {URL }          \fi
\ifx \path       \undefined \input path.sty               \fi
\ifx \ifshowURL \undefined
     \newif \ifshowURL
     \showURLtrue
\fi

\bibitem{Baccelli:2000:TML:347057.347548}
F.~Baccelli and D.~Hong.
\newblock {TCP} is max-plus linear and what it tells us on its throughput.
\newblock {\em SIGCOMM Comput. Commun. Rev.}, 30\penalty0 (4):\penalty0
  219--230, 2000.

\bibitem{lsbook}
A.~Bj\"{o}rck.
\newblock {\em Numerical Methods for Least Squares Problems}.
\newblock Society for Industrial and Applied Mathematics, 1996.
\newblock \ifshowURL {\showURL
  \path|https://epubs.siam.org/doi/abs/10.1137/1.9781611971484|}\fi.

\bibitem{BRACKLEY2012128}
C.~A. Brackley, D.~S. Broomhead, M.~C. Romano, and M.~Thiel.
\newblock A max-plus model of ribosome dynamics during m{RNA} translation.
\newblock {\em Journal of Theoretical Biology}, 303\penalty0 (Supplement
  C):\penalty0 128 -- 140, 2012.

\bibitem{butk10}
P.~Butkovi\v{c}.
\newblock {\em Max-Linear Systems: {Theory} and Algorithms}.
\newblock Springer, 2010.

\bibitem{73810}
M.~Develin and B.~Sturmfels.
\newblock {Tropical convexity}.
\newblock {\em Documenta mathematica}, 9:\penalty0 1--27, 2004.

\bibitem{Farahani2014}
S.~S. Farahani, T.~van~den Boom, and B.~De~Schutter.
\newblock Exact and approximate approaches to the identification of stochastic
  max-plus-linear systems.
\newblock {\em Discrete Event Dynamic Systems}, 24\penalty0 (4):\penalty0
  447--471, 2014.

\bibitem{7082376}
F.~Gallot, J.~L. Boimond, and L.~Hardouin.
\newblock Identification of simple elements in max-algebra: Application to siso
  discrete event systems modelisation.
\newblock In {\em 1997 European Control Conference (ECC)}, pages 1866--1871,
  1997.

\bibitem{heid2006}
B.~Heidergott, G.~J. Olsder, and J.~Woude.
\newblock {\em Max Plus at Work: Modeling and Analysis of Synchronized Systems:
  A Course on Max-Plus Algebra and Its Applications}.
\newblock Princeton University Press, 2006.

\bibitem{hook2013}
J.~Hook.
\newblock Critical path statistics of max-plus linear systems with gaussian
  noise.
\newblock {\em Journal of Applied Probability}, \penalty0 (3):\penalty0
  654--670, 09 2013.

\bibitem{Karp1972}
R.~M. Karp.
\newblock {\em Reducibility among Combinatorial Problems}, pages 85--103.
\newblock Springer US, Boston, MA, 1972.
\newblock \showISBN{978-1-4684-2001-2}.
\newblock \ifshowURL {\showURL
  \path|https://doi.org/10.1007/978-1-4684-2001-2_9|}\fi.

\bibitem{IRLS}
Q.~Lyu, Z.~Lin, Y.~She, and C.~Zhang.
\newblock A comparison of typical $\ell_{p}$ minimization algorithms.
\newblock {\em Neurocomput.}, 119:\penalty0 413--424, Nov. 2013.
\newblock \showISSN{0925-2312}.
\newblock \ifshowURL {\showURL
  \path|http://dx.doi.org/10.1016/j.neucom.2013.03.017|}\fi.

\bibitem{firststeps}
J.~Richter-Gebert, B.~Sturmfels, and T.~Theobald.
\newblock {First steps in tropical geometry}.
\newblock {\em arXiv:math/0306366}, 2003.

\bibitem{DESCHUTTER20011049}
B.~D. Schutter and T.~van~den Boom.
\newblock Model predictive control for max-plus-linear discrete event systems.
\newblock {\em Automatica}, 37\penalty0 (7):\penalty0 1049 -- 1056, 2001.

\bibitem{1184996}
B.~D. Schutter, T.~J.~J. van~den Boom, and V.~Verdult.
\newblock State space identification of max-plus-linear discrete event systems
  from input-output data.
\newblock In {\em Proceedings of the 41st IEEE Conference on Decision and
  Control}, volume~4, pages 4024--4029, 2002.

\bibitem{ssb09}
S.~Sergeev, H.~Schneider, and P.~Butkovi\v{c}.
\newblock On visualization scaling, subeigenvectors and {K}leene stars in max
  algebra.
\newblock {\em Linear Algebra and its Applications}, 431:\penalty0 2395--2406,
  2009.

\bibitem{vandenBoom2012}
T.~J.~J. van~den Boom and B.~De~Schutter.
\newblock Modeling and control of switching max-plus-linear systems with random
  and deterministic switching.
\newblock {\em Discrete Event Dynamic Systems}, 22\penalty0 (3):\penalty0
  293--332, 2012.

\bibitem{7085024}
T.~J.~J. van~den Boom, B.~D. Schutter, and V.~Verdult.
\newblock Identification of stochastic max-plus-linear systems.
\newblock In {\em 2003 European Control Conference (ECC)}, pages 618--623,
  2003.

\end{thebibliography}

\newpage

\end{document}